% !TEX encoding = IsoLatin9
\documentclass[a4paper, 15pt]{amsart}

\usepackage{amscd,amsthm,amsfonts,amssymb,amsmath,esint}
\usepackage[colorlinks=true]{hyperref}
\usepackage{fullpage}
\usepackage[all]{xy}
\usepackage[margin=1.5in]{geometry}
\usepackage{mathrsfs}
\usepackage{tikz-cd}

%cohomology groups

\newcommand{\msb}{\mathscr{B}}

\newcommand{\mst}{\mathscr{T}}

    \newcommand{\BC}{{\mathbb {C}}} 
     \newcommand{\BF}{{\mathbb {F}}}

    \newcommand{\BQ}{{\mathbb {Q}}}

     \newcommand{\BZ}{{\mathbb {Z}}}

    \newcommand{\CO}{{\mathcal {O}}}

    \newcommand{\fa}{{\mathfrak{a}}}

    \newcommand{\fm}{{\mathfrak{m}}} 
     \newcommand{\fp}{{\mathfrak{p}}}
    \newcommand{\fq}{{\mathfrak{q}}}

    \newcommand{\fw}{{\mathfrak{w}}}

    \newcommand{\fC}{{\mathfrak{C}}} 
     \newcommand{\fF}{{\mathfrak{F}}}

    \newcommand{\fM}{{\mathfrak{M}}} 
     \newcommand{\fP}{{\mathfrak{P}}}

    \newcommand{\fW}{{\mathfrak{W}}}

    \newcommand{\Cl}{{\mathrm{Cl}}}

    \newcommand{\End}{{\mathrm{End}}}

    \newcommand{\Gal}{{\mathrm{Gal}}} 
    
    \newcommand{\Hom}{{\mathrm{Hom}}}

    \newcommand{\Ker}{{\mathrm{Ker}}}

    \newcommand{\ord}{{\mathrm{ord}}}

    \renewcommand{\mod}{\ \mathrm{mod}\ }

    \newcommand{\Sel}{{\mathrm{Sel}}}

        % 2*2 matrix

    \font\cyr=wncyr10

    \newcommand{\Sha}{\hbox{\cyr X}}

    \theoremstyle{plain}
    \newtheorem{thm}{Theorem}[section] \newtheorem{cor}[thm]{Corollary}
    \newtheorem{lem}[thm]{Lemma}  \newtheorem{prop}[thm]{Proposition}

\theoremstyle{remark} 
\theoremstyle{remark} 
\theoremstyle{remark} 

    %accented words

    \numberwithin{equation}{section}

%Added by Jianing
 %Hilbert symbol

%\textwidth 140mm\textheight 210mm \topmargin -15mm
 %\oddsidemargin 0mm
 %\evensidemargin 0mm

\begin{document}

\title {On the $\lambda$-invariant of Selmer groups arising from certain quadratic twists of Gross curves}

\author{Jianing Li}

\subjclass[2010]{11R23, 11G05}

\keywords{Iwasawa theory, Selmer groups, elliptic curves}

\maketitle

\begin{abstract}
Let $q$ be a prime with $q \equiv 7 \mod 8$, and let $K=\BQ(\sqrt{-q})$. Then $2$ splits in $K$, and  we write $\fp$  for either of the primes $K$ above $2$. Let $K_\infty$ be the unique $\BZ_2$-extension of $K$ unramified outside $\fp$ with $n$-th layer $K_n$. For certain quadratic and biquadratic extensions $\fF/K$, we prove a simple exact formula for the $\lambda$-invariant of the Galois group of the maximal abelian 2-extension unramified outside $\fp$ of the field $\fF_\infty = \fF K_\infty$. Equivalently, our result determines the exact $\BZ_2$-corank of certain Selmer groups over
$\fF_\infty$ of a large family of quadratic twists of the higher dimensional abelian variety with complex multiplication, which is the restriction of scalars to $K$
of the Gross curve with complex multiplication defined over the Hilbert class field of $K$. We also discuss computations of the associated Selmer groups over $K_n$ in the case when the $\lambda$-invariant is equal to $1$.
\end{abstract}

\section{Introduction}

The aim of the present paper is to prove a simple exact formula for the $\lambda$-invariant of a certain classical Iwasawa module at the prime $p=2$. Our result can be viewed
as an analogue of a result of Kida \cite{K} and Ferrero \cite{Fer}. However, there is a fundamental difference in that, unlike Kida's and Ferrero's work, the Iwasawa module we consider 
has a simple interpretation in terms of classical infinite descent theory on a family of abelian varieties with complex multiplication arising from certain quadratic twists of the Gross
\cite{Gross1} family of elliptic curves with complex multiplication.

\medskip

We first explain our result in terms of classical Iwasawa theory. Let $q$ be any prime with $q \equiv 7 \mod 8$, and let $K=\BQ(\sqrt{-q})$. Write $\CO_K$ for the ring of integers of $K$. 
Then $2$ splits in $K$, say $2\CO_K=\fp\fp^*$. By class field theory, there is a unique $\BZ_2$-extension  $K_\infty/K$ which is unramified outside $\fp$, and we write $K_n$ for the unique intermediate field with $[K_n:K] = 2^n$, and $\CO_{K_n}$ for the ring of integers of $K_n$.  Throughout, we assume that $\fp$  corresponds to the embedding $\iota_\fp: K\to \BQ_2$ determined by $\iota_\fp(\sqrt{-q})\equiv 1\bmod 4\BZ_2$. For each prime $\fw$ of $K$, we write $\ord_\fw$ for the normalized additive valuation at $\fw$. Also, put $\fq = \sqrt{-q}\CO_K$.

%If $W$ is any algebraic extension of $K$, we define $L(W)$ to be the maximal unramified abelian $2$-extension of $W$, and $M(W)$ to be the maximal abelian $2$-extension of $W$, which is unramified outside the primes of $W$ lying above $\fp$.
\medskip

In all that follows, $R$ will denote any element of $\CO_K$, which will always be assumed to satisfy the following:-\\

\noindent {\em Twisting Hypothesis}\label{1}  We have $R\equiv 1 \mod 4\CO_K$, $(\fq, R) = 1$, and $\ord_{\fw}(R)$ is odd for each prime $\fw$ of $K$ dividing $R$.\\

\noindent We then define
\begin{equation}\label{b}
F=K(\sqrt{-\sqrt{-q}R}), \quad F'=K(\sqrt{ \sqrt{-q}R}),\quad D=K(\sqrt{-1}) \quad  \text{ and } \quad J=FF'=FD.
\end{equation}
For any finite extension $T/K$, let
\begin{equation}
T_n=TK_n, \quad T_\infty=TK_\infty.
\end{equation}
Further, let $M(T_\infty)$ be the maximal abelian $2$-extension of $T_\infty$, which is unramified outside the primes of $T_\infty$ lying above $\fp$. Then the main result of this paper is as follows.

%To state our main result, we let
%\begin{equation}\label{eq:def_r}
%s= \text{the number of primes of } K_\infty \text{  dividing } (\sqrt{-q}R).
%\end{equation}

\begin{thm}\label{thm:main}
Let $s_\infty(R)$ be the number of primes of $K_\infty$ dividing $\sqrt{-q}R$, where $R$ satisfies the above Twisting Hypothesis. Then (i) $X(F_\infty)$ is a free finitely generated $\BZ_2$-module of exact rank $s_\infty(R) - 1$; and (ii) if $q\equiv 7\bmod 16$, then $X(F'_\infty)$ (resp. $X(J_\infty)$) is a free finitely generated $\BZ_2$-module of exact rank $s_\infty(R)$ (resp. $2s_\infty(R)-1$).
\end{thm}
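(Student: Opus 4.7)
The plan is to combine a Kummer-theoretic descent at each finite level with passage to the inverse limit, using the triviality of $X(K_\infty)$ as the base case.

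\textbf{Base case.} First I would establish $X(K_\infty)=0$. Since $q\equiv 7\bmod 8$, the class number of $K$ is odd, the global units of $K$ are $\{\pm 1\}$, and $K_\infty/K$ is totally ramified at $\fp$. A classical Iwasawa-theoretic argument (vanishing in the totally-ramified, trivial class-group, finite-unit case) then yields $X(K_n)=0$ for all $n$, hence $X(K_\infty)=0$. This pins down the ``base rank'' to zero.

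\textbf{Kummer descent for $F_\infty$.} For $\Delta=\Gal(F_\infty/K_\infty)=\{1,\sigma\}$, I would split $X(F_\infty)=X(F_\infty)^{+}\oplus X(F_\infty)^{-}$ under $\sigma$; the $+$-part maps into $X(K_\infty)=0$ with controlled kernel, reducing everything to the minus part. By the idelic description of $M(F_\infty)/F_\infty$, $X(F_\infty)^{-}$ sits in a five-term sequence involving: (a) local units at primes of $K_\infty$ ramifying in $F_\infty/K_\infty$, and (b) a single global relation. The Twisting Hypothesis $R\equiv 1\bmod 4\CO_K$ together with our normalization of $\fp$ (so that $\iota_\fp(\sqrt{-q})\equiv 1\bmod 4$) ensures $F_\infty/K_\infty$ is \emph{unramified} at $\fp$, so the only ramified primes are those of $K_\infty$ dividing $\sqrt{-q}R$, giving exactly $s_\infty(R)$ ramified places.

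\textbf{Rank count.} At each such ramified prime $w$ (of odd residue characteristic), the local contribution is a free $\BZ_2$-module of rank one, coming from the pro-$2$ quotient of the local unit group at a prime of odd residue characteristic combined with the Kummer generator $-\sqrt{-q}R$; the fact that $\ord_w(R)$ is odd keeps this contribution primitive. A single global reciprocity relation (the product formula applied to the Kummer element) trims the rank by one, yielding $s_\infty(R)-1$. At the level of $\lambda$-invariants this is essentially a clean instance of Kida's formula, cleaner than the general statement because $X(K_\infty)=0$ and $\fp$ contributes nothing.

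\textbf{Freeness.} The main obstacle is upgrading this rank/$\lambda$-invariant calculation to the assertion that $X(F_\infty)$ is genuinely $\BZ_2$-\emph{free}, i.e.\ has no finite $\Lambda$-submodule. I would attack this by embedding $X(F_\infty)$ into the product of local Galois modules at the ramified primes (which are visibly $\BZ_2$-free), using the odd-valuation part of the Twisting Hypothesis to guarantee that each local component of the image is primitive. Equivalently, one shows via a Nakayama/control-theorem argument that $X(F_\infty)_{\Gamma_n}$ is $\BZ_2$-free of the predicted rank for all $n\gg 0$, which forces $X(F_\infty)$ itself to be $\BZ_2$-free.

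\textbf{Part (ii).} For $F'_\infty$ and $J_\infty$ I would run the same descent. The hypothesis $q\equiv 7\bmod 16$ determines the residue of $\iota_\fp(\sqrt{-q})$ modulo $8$, hence the class of $\pm\sqrt{-q}R$ in $K_\fp^{\times}/(K_\fp^{\times})^2$: under $q\equiv 7\bmod 16$ the element $+\sqrt{-q}R$ defining $F'$ fails to be a local square at $\fp$, so $\fp$ now ramifies in $F'_\infty/K_\infty$ and contributes one extra $\BZ_2$ to the rank, giving $s_\infty(R)$. For the biquadratic tower $J_\infty=F_\infty D_\infty$, I would decompose $X(J_\infty)$ under $\Gal(J_\infty/K_\infty)\cong(\BZ/2)^2$ into its three nontrivial eigenspaces (corresponding to $F_\infty$, $F'_\infty$, $D_\infty$), apply the $F$ and $F'$ computations, and add the $D_\infty$ contribution (whose eigenspace is analyzed analogously and matches one of the other two under the shift by $i$), summing to $2s_\infty(R)-1$.
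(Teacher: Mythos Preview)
Your outline diverges from the paper's argument and has genuine gaps. The paper does not compute $X(F_\infty)$ directly via a five-term genus-theory sequence; it separates upper and lower bounds. For the upper bound and freeness: from $X(K_\infty)=0$ a Nakayama argument on $X(F_\infty)_\Delta$ shows $X(F_\infty)_\Delta$ is an $\BF_2$-space of dimension at most $s_\infty(R)-1$, and torsion-freeness is obtained by invoking Greenberg's theorem on the absence of nonzero finite $\Lambda$-submodules. Your freeness argument does not work: over $\BZ_2$ you cannot split $X(F_\infty)=X^+\oplus X^-$ since $2$ is not invertible, and neither the proposed ``embedding into local Galois modules'' nor the control-theorem sketch supplies a mechanism to kill a finite torsion summand---Greenberg's theorem is exactly what is needed here. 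For the lower bound, your appeal to ``essentially a clean instance of Kida's formula'' hides the real content. The paper's key step is to prove, via Chevalley's ambiguous-class-number formula, that the ray class group of $K_n$ modulo $(\fp^*)^2$ has odd order for every $n$; this lets one choose generators $\alpha_1,\dots,\alpha_s\equiv 1\bmod(\fp^*)^2$ for the prime-power factors of $\sqrt{-q}R\,\CO_{K_n}$ (for $n$ large), and one then checks by hand that $T=K_\infty(\sqrt{\alpha_1},\dots,\sqrt{\alpha_s})$ lies in $M(F_\infty)$ with $\Gal(T/F_\infty)\cong(\BZ/2\BZ)^{s-1}$. Nothing in your sketch reproduces this construction, and without it you have only the upper bound.

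Part (ii) contains two concrete errors. First, it is $\fp^*$, not $\fp$, that ramifies in $F'/K$ and in $D/K$: with the chosen sign one has $\sqrt{-q}R\equiv -1\bmod(\fp^*)^2$, and the hypothesis $q\equiv 7\bmod 16$ is used to guarantee that $\fp^*$ is \emph{inert} in $K_\infty/K$, so that it contributes exactly one extra prime to $s_\infty(F'/K)=s_\infty(R)+1$. (Primes above $\fp$ are excluded from $s_\infty$ by definition, so their ramification behaviour is irrelevant to the count.) Second, for $J_\infty$ the $D$-component does not ``match one of the other two'': under $q\equiv 7\bmod 16$ one has $s_\infty(D/K)=1$ and hence $X(D_\infty)=0$ by the same upper-bound argument. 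The biquadratic rank identity
\[
t_2(X(J_\infty))+2\,t_2(X(K_\infty))=t_2(X(F_\infty))+t_2(X(F'_\infty))+t_2(X(D_\infty))
\]
then gives $t_2(X(J_\infty))=(s_\infty(R)-1)+s_\infty(R)+0=2s_\infty(R)-1$.
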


 \noindent We point out that the number $s_\infty(R)$ is finite and can be computed explicitly as follows. Let $h$ be the class number of $K$, which is odd.
Suppose $\fw$ is a prime of $K$ distinct from $\fp$. Then $\fw^h=(w)$ is principal. Adjust the sign of $w$ so that $\iota_\fp(w)\equiv 1\bmod 4\BZ_2$. Then, by class field theory (see the proof of Lemma 2.3 of \cite{CLL} or of Lemma 3.3 of \cite{CL}), the number of primes of $K_\infty$ dividing $\fw$ is 
\begin{equation*}
2^{\ord_2( \iota_\fp(w) -1)-2}.
\end{equation*}
It follows that if $q\equiv 7\bmod 16$ and $R=1$, we have $s_\infty(R) = 1$ whence $X(J_\infty)$ is a free $\BZ_2$-module of rank $1$. This recovers \cite[Theorem 1.1]{CLL}. It was proven in \cite{L1} that $X(F_\infty)\neq 0$ when $q\equiv 15\bmod 16$ and $R=1$, but the methods there are different from those of the present paper. The fact that $X(F_\infty)$ is a free finitely generated $\BZ_2$-module of $\BZ_2$-rank at most $s_\infty(R)-1$ has been proven when $q =7$ in \cite{CC}, and the argument extends easily to the more general case considered here. In particular, it can be proved in this way that $X(D_\infty)=0$ when $q\equiv 7\bmod 16$ (see \cite[Proposition 2.2]{CLL} or Proposition~\ref{prop:Xbound}).

%What is new in the present paper is to show that $s_\infty(R)-1$ is also a lower bound for this rank. We do this by constructing $s_\infty(R)-1$ independent quadratic extensions of $F_\infty$ which are contained in $M(F_\infty)$. Our proof makes essential use of a generalization of a classical lemma of Chevalley (see \cite{Gras} or \cite{LY}).

\medskip

As we shall explain in \S4, Theorem \ref{thm:main} has an equivalent formulation in terms of a certain Selmer group arising from infinite descent on the $h$-dimensional abelian
variety which is the twist by the quadratic extension $K(\sqrt{R})/K$ of the abelian variety $B/K$; here $B/K$ is the restriction of scalars from the Hilbert class field $H$ of $K$ to $K$ of the Gross elliptic curve $A/H$ with complex multiplication by $\CO_K$ \cite{Gross1}. It seems that no such clear cut result giving the exact $\BZ_2$-corank of 
Selmer groups of a large family of quadratic twists of an abelian variety over a $\BZ_2$-extension of the base field was known previously. In \S5, shall we discuss more on the case $s_\infty(R)=2$ where $R$ is a square free rational integer satisfying the Twisting Hyptheosis.

\medskip

The author is deeply grateful to John Coates for discussing this problem and for his careful reading and polishing of the manuscript. Thanks  also goes to Yongxiong Li for helpful discussions and comments.

\section{Prelimaries}

For this section alone, we consider a more general situation than in the rest of the paper. Take $p$ to be an arbitrary prime number, and take $K$ to be any imaginary quadratic field in which $p$ splits. We fix one of the primes $\fp$ of $K$ above lying above $p$. By class field theory, there is a unique $\BZ_p$-extension  $K_\infty$ of $K$ which is unramified outside $\fp$. If $W/K$ is any finite extension, we define $W_\infty = W K_\infty$. Let $M(W_\infty)$ be the maximal abelian $p$-extension of $W_\infty$, which is unramified outside the primes lying above $\fp$, and put $X(W_\infty) = \Gal(M(W_\infty)/W_\infty)$. Again by class field theory, we have:-
\begin{lem}\label{lem:ram_basic1}
There are only finitely many primes of $W_\infty$ lying above each prime of $W$.
\end{lem}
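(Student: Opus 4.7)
The plan is to reduce the statement to the analogous one for $K_\infty/K$ and then invoke global and local class field theory. Since $W_\infty = WK_\infty$ with $W/K$ finite, the extension $W_\infty/K_\infty$ has degree at most $[W:K]$, so only finitely many primes of $W_\infty$ lie above each prime of $K_\infty$. As any prime of $W_\infty$ above a prime of $W$ restricts to a prime of $K_\infty$ above the corresponding prime of $K$, it suffices to show that each prime of $K$ has only finitely many primes of $K_\infty$ above it.

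For the prime $\fp$: let $I_\fp \subseteq \Gamma := \Gal(K_\infty/K) \cong \BZ_p$ be its inertia group. The fixed field $K_\infty^{I_\fp}$ is the maximal subextension of $K_\infty/K$ unramified at $\fp$; since $K_\infty/K$ is unramified outside $\fp$, this fixed field is unramified over $K$ everywhere, and hence contained in the (finite) Hilbert class field of $K$. Thus $I_\fp$, and so the decomposition group $D_\fp \supseteq I_\fp$, has finite index in $\Gamma$, giving finitely many primes of $K_\infty$ above $\fp$.

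For a prime $\fu \neq \fp$, primes of $K_\infty$ above $\fu$ are permuted transitively by $\Gamma$, so their number is $[\Gamma : D_\fu]$. Since $\fu$ is unramified, $D_\fu$ is topologically generated by $\Frob_\fu$; as a closed subgroup of $\BZ_p$ it is either open or trivial, so the task is to rule out $\Frob_\fu = 0$. Let $h = |\Cl(K)|$ and pick $\alpha \in K^\times$ with $(\alpha) = \fu^h$. Applying global reciprocity to the principal idele $\alpha$, the contributions at every finite place $v \notin \{\fu, \fp\}$ (where $\alpha$ is a local unit and $K_\infty$ is unramified at $v$) and at the complex archimedean place all vanish, yielding
$$\Frob_\fu^h \cdot \rec_\fp(\alpha) = 1 \quad \text{in } \Gamma,$$
where on the right $\alpha$ is viewed in $\CO_{K_\fp}^\times$ via the embedding $K \hookrightarrow K_\fp$.

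Local class field theory identifies $\rec_\fp(\CO_{K_\fp}^\times)$ with the inertia subgroup $I_\fp \cong \BZ_p$; combined with the topological decomposition $\CO_{K_\fp}^\times \cong \mu(\CO_{K_\fp}^\times) \times \BZ_p$ and the fact that every continuous surjection $\BZ_p \twoheadrightarrow \BZ_p$ is an isomorphism, the kernel of $\rec_\fp|_{\CO_{K_\fp}^\times}$ is precisely the torsion subgroup $\mu(\CO_{K_\fp}^\times)$. Thus if $\Frob_\fu = 0$, then $\alpha \in \mu(\CO_{K_\fp}^\times)$; by the injectivity of $K \hookrightarrow K_\fp$, $\alpha$ is a root of unity in $K$, so $\alpha \in \CO_K^\times$ and $(\alpha) = (1)$, contradicting $(\alpha) = \fu^h \neq (1)$. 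The main obstacle is this structural identification of $\ker(\rec_\fp|_{\CO_{K_\fp}^\times})$ with the roots of unity; the remaining ingredients, namely the reduction to $K_\infty/K$ and the global reciprocity computation, are essentially formal.
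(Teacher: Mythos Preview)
Your proof is correct. The paper does not give a detailed argument for this lemma; it simply states the result with the phrase ``Again by class field theory,'' leaving the justification implicit (and, in a related context in the introduction, refers the reader to \cite{CLL} and \cite{CL} for the explicit computation of the number of primes above a given $\fw$). Your argument is precisely the standard class field theory proof that underlies this: the reduction from $W_\infty$ to $K_\infty$ via finiteness of $[W_\infty:K_\infty]$, the use of the finiteness of the Hilbert class field to bound the index of the inertia group at $\fp$, and the product formula for the global reciprocity map combined with the structure of $\CO_{K_\fp}^\times$ to show $\Frob_\fu$ is nontrivial for $\fu \neq \fp$. So your approach is exactly what the paper has in mind, only spelled out in full.
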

\noindent  For each finite extension $\fW/W$, we define $S_\infty(\fW/W)$ to be the set of primes of $W_\infty$, which do not lie above $\fp$, and which are ramified in $\fW_\infty$. Of course, $S_\infty(\fW/W)$ is a finite set by Lemma~\ref{lem:ram_basic1}. Put
\begin{equation}\label{eq:defs}
s_\infty(\fW/W) = \# (S_\infty(\fW/W)).
\end{equation}

Assume that $\fW/W$ is a cyclic extension of degree $p$ with $\fW\not\subset W_\infty$. Thus $\fW_\infty/W_\infty$ is cyclic of degree $p$, and we put $\Delta = \Gal(\fW_\infty/W_\infty)$, As usual, $X(\fW_\infty)$ has its natural structure as a module over the group ring $\BZ_p[\Delta]$. 
The maximal ideal of the local ring $\BZ_p[\Delta]$ is $(p,\delta-1)$, and its residue field is the finite field $\BF_p$ with $p$ elements; here $\delta$ is any generator of $\Delta$. If $X$ is a $\BZ_p[\Delta]$-module, put $X_{\Delta} = X/(\delta-1)X$.

%If $X$ is a finitely generated $\BZ_p$-module, we let $\rk_p(X)$ denote the $\BF_p$-dimension of $X/pX$. Then $\rk_p(X)$ is the minimal number of generators of $X$ as a $\BZ_p$-module. 

\begin{lem}\label{lem:prelim}
Let $\fW/W$ be a cyclic extension of degree $p$ with $\fW\not\subset F_\infty$. Then (i) $X(\fW_\infty)$ is finitely generated as a $\BZ_p$-module if and only if $X(W_\infty)$ is finitely generated as a $\BZ_p$-module, (ii) if $X(W_\infty) = 0$, then $(X(\fW_\infty))_{\Delta}$ is an $\BF_p$-vector space of dimension at most $s_\infty(\fW/W)-1$, and (iii) the restriction map induces an isomorphism $(X(\fW_\infty) \otimes_{\BZ_p} \BQ_p)_{\Delta} \cong X(W_\infty)\otimes_{\BZ_p}\BQ_p$.
\end{lem}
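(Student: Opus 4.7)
My plan is to study the natural restriction map $\rho \colon X(\fW_\infty) \to X(W_\infty)$ and its descent to $\Delta$-coinvariants $\bar\rho \colon X(\fW_\infty)_\Delta \to X(W_\infty)$, and to show that $\coker(\bar\rho)$ has order dividing $p$ while $\ker(\bar\rho)$ is a finite $\BF_p$-vector space of dimension at most $s_\infty(\fW/W) - 1$. Once this is established, (iii) follows because both kernel and cokernel are torsion hence vanish after $\otimes_{\BZ_p}\BQ_p$; (ii) is direct, since $X(W_\infty)=0$ forces $X(\fW_\infty)_\Delta = \ker(\bar\rho)$; and (i) follows from Nakayama's lemma over the compact local ring $\BZ_p[\Delta]$.

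The construction of $\rho$ is the first step. Since any abelian pro-$p$ extension of $W_\infty$ unramified outside $\fp$ remains so as an extension of $\fW_\infty$, we have $M(W_\infty) \subseteq M(\fW_\infty)$ and restriction yields $\rho$, with image $\Gal(M(W_\infty)/\fW_\infty \cap M(W_\infty))$ of index $\leq p$ in $X(W_\infty)$. Since $M(W_\infty)/W_\infty$ is itself abelian, $\Delta$ acts trivially on $X(W_\infty)$, so $(\delta-1)X(\fW_\infty) \subseteq \ker(\rho)$ and $\rho$ factors through $X(\fW_\infty)_\Delta$ to give $\bar\rho$.

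The heart of the argument is to compute $\ker(\bar\rho)$ via genus theory. Put $G := \Gal(M(\fW_\infty)/W_\infty)$. Since $\Delta$ and $X(\fW_\infty)$ are both abelian, a direct computation with commutators gives $[G,G] = (\delta-1)X(\fW_\infty)$, so the maximal abelian subextension $L$ of $M(\fW_\infty)/W_\infty$ satisfies $\Gal(L/W_\infty) = G^{\ab}$ and fits into an exact sequence $0 \to X(\fW_\infty)_\Delta \to \Gal(L/W_\infty) \to \Delta \to 0$; moreover $\ker(\bar\rho)$ is identified with $\Gal(L/M(W_\infty)\fW_\infty)$. Using class field theory I would translate this into a statement about the ramification of $L/W_\infty$ at the primes in $S_\infty(\fW/W)$: each such prime, being totally ramified in the cyclic extension $\fW_\infty/W_\infty$ of degree $p$, contributes at most a copy of $\BF_p$ to $\ker(\bar\rho)$ (since one inertia generator is already absorbed by $\fW_\infty$), and one global relation is imposed by the reciprocity law (the product of the local Artin symbols on a global unit of $W$ must vanish). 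This yields the bound $\dim_{\BF_p}\ker(\bar\rho) \leq s_\infty(\fW/W) - 1$.

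For part (i), Nakayama's lemma for the compact local ring $\BZ_p[\Delta]$ (with maximal ideal $(p,\delta-1)$ and residue field $\BF_p$) gives: $X(\fW_\infty)$ is f.g.\ over $\BZ_p[\Delta]$ iff $X(\fW_\infty)_\Delta/p$ is finite iff $X(\fW_\infty)_\Delta$ is f.g.\ over $\BZ_p$. Since $\BZ_p[\Delta]$ is itself f.g.\ over $\BZ_p$, this is equivalent to $X(\fW_\infty)$ being f.g.\ over $\BZ_p$, which by the finite kernel and cokernel of $\bar\rho$ is in turn equivalent to $X(W_\infty)$ being f.g.\ over $\BZ_p$. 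The main obstacle I foresee is the ramification step in the third paragraph: to show that each inertia contributes only an $\BF_p$-line (rather than a potentially larger $\BZ_p$-module) to $\ker(\bar\rho)$ requires exploiting the specific structure of the cyclic order-$p$ extension $\fW_\infty/W_\infty$, and the $-1$ in the bound must be extracted from precisely one global relation coming from reciprocity. Everything else in the proof is formal once this ramification analysis is in place.
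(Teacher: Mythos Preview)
Your overall framework matches the paper's: set $L$ (the paper calls it $T$) to be the maximal abelian extension of $W_\infty$ inside $M(\fW_\infty)$, identify $X(\fW_\infty)_\Delta \cong \Gal(L/\fW_\infty)$ via $[G,G] = (\delta-1)X(\fW_\infty)$, bound $\Gal(L/M(W_\infty))$ by the inertia subgroups at primes in $S_\infty(\fW/W)$, and invoke Nakayama over $\BZ_p[\Delta]$. Parts (i) and (iii) go through exactly as you indicate.

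However, your account of the ``$-1$'' in part (ii) is misplaced, and the obstacle you flag at the end is not real. No reciprocity relation is needed. Since $L \subset M(\fW_\infty)$, the extension $L/\fW_\infty$ is unramified outside $\fp$; hence for each $v \in S_\infty(\fW/W)$ the inertia group $I_v \subset \Gal(L/W_\infty)$ maps \emph{isomorphically} onto the inertia in $\Delta = \Gal(\fW_\infty/W_\infty)$, and so $|I_v|=p$ immediately. These $I_v$ generate $\Gal(L/M(W_\infty))$, giving $\dim_{\BF_p} \Gal(L/M(W_\infty)) \leq s_\infty(\fW/W)$; this is all the paper uses for general $X(W_\infty)$. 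When $X(W_\infty) = 0$ one has $M(W_\infty) = W_\infty$, so $\dim_{\BF_p} \Gal(L/W_\infty) \leq s_\infty(\fW/W)$, and the ``$-1$'' comes simply from passing to the index-$p$ subgroup $\Gal(L/\fW_\infty)$. In your more general formulation the same mechanism works: $\ker(\bar\rho) = \Gal(L/M(W_\infty)\fW_\infty)$ is the kernel of the surjection $\Gal(L/M(W_\infty)) \twoheadrightarrow \Gal(M(W_\infty)\fW_\infty/M(W_\infty)) \cong \Delta$ (valid whenever $s_\infty(\fW/W)\geq 1$, since then $\fW_\infty \not\subset M(W_\infty)$; the case $s_\infty(\fW/W)=0$ gives $\ker(\bar\rho)=0$ directly). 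So the ``$-1$'' is purely group-theoretic, not a consequence of a global product formula, and the inertia bound requires no delicate analysis.
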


\begin{proof}
If $X(\fW_\infty)$ is a finitely generated $\BZ_p$-module, so is its quotient $\Gal( \fW_\infty M(W_\infty)/\fW_\infty )$. Since $[\fW_\infty:W_\infty]$ is finite, it follows that the abelian group $\Gal(\fW_\infty M(W_\infty)/W_\infty)$ is finitely generated over $\BZ_p$,  and hence so is its quotient $X(W_\infty)$. Conversely, assume that $X(W_\infty)$ is finitely generated over $\BZ_p$, and let $T$ denote the maximal abelian extension of $W_\infty$ contained in $M(\fW_\infty)$. Then we have an isomorphism 
\begin{equation}\label{1}
X(\fW_\infty)_{\Delta} \cong \Gal(T/\fW_\infty).
\end{equation} 
Now $M(W_\infty)$ is a subfield of $T$, and $\Gal(T/M(W_\infty))$ is generated by the inertial subgroups of the  primes in $S_\infty(\fW/W)$ in the extension $T/W_\infty$. Such an inertial subgroup is of order $p$ since we are assuming that $\fW_\infty/W_\infty$ is of degree $p$. Thus $\Gal(T/M(W_\infty))$ is an $\BF_p$-vector space, and
\begin{equation}\label{eq:prelim1.1}
\dim_{\BF_p} \Gal(T/M(W_\infty)) \leq s_\infty(\fW/W).
\end{equation}
Since we are assuming that $X(W_\infty)$ is finitely generated over $\BZ_p$, and the set $S_\infty(\fW/W)$ is finite, it follows easily that $\Gal(T/\fW_\infty)$ is finitely generated over $\BZ_p$. Thus, in view of \eqref{1}, $X(\fW_\infty)/(p,\delta-1) X(\fW_\infty) $ is a finite dimensional $\BF_p$-vector space. Since $X(\fW_\infty)$ is a compact $\BZ_p[\Delta]$-module, it follows from Nakayama's lemma that $X(\fW_\infty)$ is a finitely generated  $\BZ_p[\Delta]$-module, whence it is also finitely generated as a $\BZ_p$-module. This completes the proof of (i).  For (ii), when $X(W_\infty)=0$, \eqref{eq:prelim1.1} becomes $\dim _{\BF_p}\Gal(T/W_\infty)\leq s_\infty(\fW/W)$, whence $\dim_{\BF_p} \Gal(T/\fW_\infty)\leq s_\infty(\fW/W)-1$. Recalling \eqref{1}, the assertion (ii) follows.
To prove (iii), we first note that \eqref{1} implies that we have the exact sequence
\begin{equation}\label{2}
1 \to  X(\fW_\infty)_{\Delta} \to \Gal(T/W_\infty) \to \Gal(\fW_\infty/W_\infty) \to 1.
\end{equation}
We also have the exact sequence
\begin{equation}\label{3}
1 \to \Gal(T/M(W_\infty)) \to \Gal(T/W_\infty) \to X(W_\infty) \to 1.
\end{equation}
Since $\Gal(\fW_\infty/W_\infty)$ and $\Gal(T/M(W_\infty))$ are both finite, it follows that 
$$ 
(X(\fW_\infty)_{\Delta}) \otimes_{\BZ_p} \BQ_p \cong X(W_\infty) \otimes_{\BZ_p} \BQ_p.
$$
 This completes the proof.
\end{proof}

\begin{cor}
If $\fW/W$ is a finite Galois extension of $p$-power degree, then $X(\fW_\infty)$ is a finitely generated $\BZ_p$-module if and only if $X(W_\infty)$ is a finitely generated $\BZ_p$-module.
\end{cor}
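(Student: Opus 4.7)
The plan is to reduce the statement to the cyclic degree-$p$ case already handled by Lemma~\ref{lem:prelim}(i), by using a composition series of the $p$-group $\Gal(\fW/W)$. Set $[\fW:W]=p^n$. Since every finite $p$-group is nilpotent, $\Gal(\fW/W)$ admits a normal series
\[
\Gal(\fW/W) = G_0 \supset G_1 \supset \cdots \supset G_n = \{1\}
\]
with $G_{i+1}$ normal in $G_i$ and $[G_i:G_{i+1}] = p$ for each $i$. Taking fixed fields, set $L_i = \fW^{G_i}$, so we obtain a tower
\[
W = L_0 \subset L_1 \subset \cdots \subset L_n = \fW
\]
in which each step $L_{i+1}/L_i$ is cyclic of degree $p$.

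The claim I would prove, by induction on $i$, is that $X(L_{i,\infty})$ is a finitely generated $\BZ_p$-module if and only if $X(W_\infty)$ is a finitely generated $\BZ_p$-module. The base case $i=0$ is tautological. For the inductive step, assuming the equivalence for $L_i$, I distinguish two cases for the cyclic degree-$p$ extension $L_{i+1}/L_i$:

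\emph{Case A:} $L_{i+1}\subset L_{i,\infty}$. Then $L_{i+1,\infty} = L_{i+1}K_\infty = L_{i,\infty}$, so $X(L_{i+1,\infty}) = X(L_{i,\infty})$ and the equivalence is immediate.

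\emph{Case B:} $L_{i+1}\not\subset L_{i,\infty}$. Then Lemma~\ref{lem:prelim}(i), applied to the cyclic degree-$p$ extension $L_{i+1}/L_i$, yields that $X(L_{i+1,\infty})$ is finitely generated over $\BZ_p$ if and only if $X(L_{i,\infty})$ is.

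Combining the two cases with the inductive hypothesis completes the step, and taking $i = n$ delivers the corollary. The only point requiring care is the bookkeeping of Case A, which is needed because Lemma~\ref{lem:prelim} is stated under the non-containment hypothesis; but this case is entirely trivial since $\fW_\infty$ collapses onto $L_{i,\infty}$. I do not anticipate any genuine obstacle: the $p$-power hypothesis is used exactly to produce the composition series of cyclic degree-$p$ steps.
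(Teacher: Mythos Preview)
Your proposal is correct and follows essentially the same approach as the paper, which simply remarks that the corollary is immediate from the solvability of finite $p$-groups. Your treatment is in fact more careful: you explicitly handle the edge case $L_{i+1}\subset L_{i,\infty}$, which the non-containment hypothesis of Lemma~\ref{lem:prelim} technically requires.
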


\noindent This is immediate because any finite $p$-group is solvable.

\begin{lem}\label{lem:naka1}
Let $\Delta$ be a cyclic group of order $p$. Let Y be a free $\BZ_p$-module of finite rank, which is also a $\Delta$-module. Assume
that $Y_\Delta = (\BZ/p\BZ)^r$ for some integer $r\geq 0$. Then 
(i) $N_{\Delta} (Y)=0$, where $N_{\Delta} =\sum_{\tau \in \Delta} \tau$, whence $Y$ is a $\BZ_p[\Delta]/(N_{\Delta})$-module, and
(ii)  $Y$ is a quotient of $\BZ^{(p-1)r}_{p}$ for any prime $p$, and $Y\cong \BZ^r_2$ when $p=2$.
\end{lem}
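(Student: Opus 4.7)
The plan is to first prove (i) by a short torsion/torsion-free dichotomy, and then use (i) to put $Y$ in the category of modules over a small local ring, so that (ii) becomes an application of Nakayama's lemma.

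For (i): let $\delta$ be a generator of $\Delta$. In the group ring $\BZ_p[\Delta]$ one has the identities $\delta \cdot N_\Delta = N_\Delta$ and $N_\Delta \cdot (\delta-1) = 0$. The first shows that multiplication by $N_\Delta$ sends $Y$ into the fixed submodule $Y^\Delta$, and the second shows that it annihilates $(\delta-1)Y$, so it factors as
\[
Y \twoheadrightarrow Y_\Delta \longrightarrow Y^\Delta \hookrightarrow Y.
\]
By hypothesis $Y_\Delta \cong (\BZ/p\BZ)^r$ is killed by $p$, while $Y^\Delta$ is a $\BZ_p$-submodule of the free $\BZ_p$-module $Y$ and is therefore $\BZ_p$-torsion-free. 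A map from a torsion module to a torsion-free one is zero, so $N_\Delta Y = 0$.

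For (ii): by (i) the $\BZ_p[\Delta]$-action on $Y$ factors through the quotient ring
\[
R := \BZ_p[\Delta]/(N_\Delta) \cong \BZ_p[X]/\bigl(1+X+\cdots+X^{p-1}\bigr) \cong \BZ_p[\zeta_p],
\]
the ring of integers of the totally ramified local cyclotomic extension $\BQ_p(\zeta_p)/\BQ_p$. This is a discrete valuation ring with uniformizer $\pi := \zeta_p-1$ (the image of $\delta - 1$), residue field $\BF_p$, and relation $(p) = (\pi)^{p-1}$. Under this identification $Y/\pi Y = Y_\Delta \cong \BF_p^r$, and $Y$ is a finitely generated $R$-module because it is already finitely generated over $\BZ_p \subseteq R$. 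Nakayama's lemma, applied to the local ring $R$, then lifts an $\BF_p$-basis of $Y/\pi Y$ to a set of $r$ generators of $Y$ over $R$, yielding a surjection $R^r \twoheadrightarrow Y$. Since $R \cong \BZ_p^{p-1}$ as a $\BZ_p$-module, $R^r \cong \BZ_p^{(p-1)r}$, which proves the first assertion of (ii). When $p = 2$ the ring $R = \BZ_2[X]/(1+X)$ collapses to $\BZ_2$ itself; the surjection becomes $\BZ_2^r \twoheadrightarrow Y$, and since $Y$ is itself a free $\BZ_2$-module of finite rank with $\dim_{\BF_2} Y/2Y = r$, one reads off $Y \cong \BZ_2^r$.

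I do not expect a real obstacle here. The only conceptual point is to recognize that killing the norm turns $\BZ_p[\Delta]$ into the cyclotomic DVR $\BZ_p[\zeta_p]$, so that Nakayama with a single uniformizer controls the number of generators; everything else is a routine unwinding of the definitions.
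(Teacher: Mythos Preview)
Your proof is correct. The paper itself does not give a detailed argument but simply refers to \cite[Lemma~2.8]{CL}, attributing the idea to Sharifi; your approach---factoring $N_\Delta$ through $Y_\Delta \to Y^\Delta$ to kill it, then passing to the DVR $\BZ_p[\zeta_p]$ and applying Nakayama at the uniformizer $\zeta_p - 1$---is exactly the standard argument one expects there, so there is nothing to compare.
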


\begin{proof}
The proof, which the authors attribute to Sharifi, is entirely similar to the case $p=2$ given in \cite[Lemma 2.8]{CL}.
\end{proof}

\noindent Combining Lemmas \ref{lem:prelim} and \ref{lem:naka1}, we immediately obtain:-
\begin{lem}\label{lem:prelim2}
Let $\fW/W$ be a cyclic extension of degree $p$. Assume that $X(W_\infty)=0$, and that $X(\fW_\infty)$ is a torsion free $\BZ_p$-module. Then $X(\fW_\infty)$ is a finitely generated $\BZ_p$-module of rank at most $(p-1)(s_\infty(\fW/W)-1)$.  
\end{lem}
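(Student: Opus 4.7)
The plan is to assemble this as an immediate consequence of Lemmas~\ref{lem:prelim} and \ref{lem:naka1}, exactly as the preceding remark advertises. First I would invoke Lemma~\ref{lem:prelim}(i) with the hypothesis $X(W_\infty)=0$: the zero module being trivially finitely generated over $\BZ_p$, this forces $X(\fW_\infty)$ to be finitely generated over $\BZ_p$. Combined with the standing assumption that $X(\fW_\infty)$ is $\BZ_p$-torsion free, and using that $\BZ_p$ is a PID, this realises $Y:=X(\fW_\infty)$ as a free $\BZ_p$-module of finite rank carrying a natural $\Delta$-action, where $\Delta=\Gal(\fW_\infty/W_\infty)$. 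Thus the ambient setup of Lemma~\ref{lem:naka1} will apply the moment the coinvariants $Y_\Delta$ are shown to have the required form.

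Next I would appeal to Lemma~\ref{lem:prelim}(ii), still under $X(W_\infty)=0$, which says precisely that $Y_\Delta$ is an $\BF_p$-vector space of dimension at most $s_\infty(\fW/W)-1$. Hence $Y_\Delta\cong(\BZ/p\BZ)^r$ for some integer $0\le r\le s_\infty(\fW/W)-1$, and Lemma~\ref{lem:naka1}(ii) then exhibits $Y$ as a quotient of $\BZ_p^{(p-1)r}$. Since $Y$ is already free over $\BZ_p$, this forces $\rank_{\BZ_p}Y\le(p-1)r\le(p-1)(s_\infty(\fW/W)-1)$, which is exactly the bound asserted in Lemma~\ref{lem:prelim2}.

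I do not expect any real obstacle in this step: the substantive inputs---the class-field-theoretic analysis of the inertia subgroups above $S_\infty(\fW/W)$ inside the Galois group $\Gal(T/W_\infty)$, and Sharifi's structural lemma on free $\BZ_p[\Delta]$-modules with prescribed $\BF_p$-coinvariants---have already been packaged into Lemmas~\ref{lem:prelim} and \ref{lem:naka1}. The only mild care needed is to ensure that the torsion-freeness hypothesis is used in the correct place, namely to pass from ``finitely generated over $\BZ_p$'' to ``free of finite rank'' before invoking Lemma~\ref{lem:naka1}, so that the rank bound extracted from the coinvariants is genuine rather than merely modulo torsion.
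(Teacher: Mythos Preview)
Your proposal is correct and follows exactly the approach the paper intends: the paper simply states that Lemma~\ref{lem:prelim2} is obtained by combining Lemmas~\ref{lem:prelim} and \ref{lem:naka1}, and your write-up unpacks this combination in precisely the way required, including the use of torsion-freeness to pass to a free $\BZ_p$-module before invoking Lemma~\ref{lem:naka1}.
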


For a finitely generated $\BZ_p$-module $Y$, we write $t_p (Y)$ for its $\BZ_p$-rank. In our applications, we only need the following lemma for the prime $p=2$. 

\begin{lem}\label{lem:prelim3}
Assume that $p=2$, and let $W$ be any finite extension of $K$ such that  $X(W_\infty)$ is a finitely generated $\BZ_2$-module. Take $\fM$ to be any Galois extension of $W$ with $\Gal(\fM/W) \cong \Gal(\fM_\infty/W_\infty)\cong (\BZ/2\BZ)^2$. Then 
\begin{equation}
t_2( X(\fM_\infty)) + 2 t_2(X(W_\infty)) = \sum_{i=1}^{3} t_2(X(\fW_{i,\infty})),
 \end{equation}
where $\fW_1,\fW_2,\fW_3$ are the three quadratic extensions of $W$ contained in $\fM$.
\end{lem}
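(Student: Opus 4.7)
The plan is to prove the identity after tensoring with $\BQ_2$, using the representation theory of the Klein four group $G = \Gal(\fM_\infty/W_\infty) \cong (\BZ/2\BZ)^2$. Let $V = X(\fM_\infty)\otimes_{\BZ_p}\BQ_2$, which is a finite dimensional $\BQ_2[G]$-module of dimension $t_2(X(\fM_\infty))$; finite dimensionality follows from the corollary to Lemma \ref{lem:prelim} applied to the $2$-extension $\fM/W$. Since $G$ has exponent $2$, all its irreducible characters are $\BQ_2$-rational: the trivial character $\chi_0$, and three nontrivial characters $\psi_1, \psi_2, \psi_3$, where $\psi_i$ is characterized as the unique nontrivial character whose kernel is $\Delta_i := \Gal(\fM_\infty/\fW_{i,\infty})$. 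By Maschke's theorem, $V$ decomposes as $V = V(\chi_0) \oplus V(\psi_1) \oplus V(\psi_2) \oplus V(\psi_3)$ into isotypic components.

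The next step is to read off the dimensions of the constituent Iwasawa modules from this decomposition. Because $|G|$ is invertible in $\BQ_2$, coinvariants agree with invariants in dimension, so $\dim_{\BQ_2} V_{\Delta_i} = \dim_{\BQ_2} V^{\Delta_i}$, and the latter equals $\dim V(\chi_0) + \dim V(\psi_i)$ since $\psi_j|_{\Delta_i}$ is trivial exactly when $j = i$ (or $j=0$). Likewise $\dim V_G = \dim V(\chi_0)$. The hypothesis $\Gal(\fM/W) \cong \Gal(\fM_\infty/W_\infty)$ means $\fM \cap W_\infty = W$, so in particular each $\fW_i \not\subset W_\infty$ and $\fM \not\subset \fW_{i,\infty}$, so Lemma \ref{lem:prelim}(iii) is applicable both to the degree-$2$ extensions $\fM_\infty/\fW_{i,\infty}$ and, after composition, to $\fM_\infty/W_\infty$ via any chain of two quadratic steps. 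This yields $V_{\Delta_i} \cong X(\fW_{i,\infty}) \otimes \BQ_2$ and $V_G \cong X(W_\infty) \otimes \BQ_2$, whence
\begin{equation*}
t_2(X(\fW_{i,\infty})) = \dim V(\chi_0) + \dim V(\psi_i), \qquad t_2(X(W_\infty)) = \dim V(\chi_0).
\end{equation*}

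Summing over $i = 1, 2, 3$ gives
\begin{equation*}
\sum_{i=1}^{3} t_2(X(\fW_{i,\infty})) = 3\dim V(\chi_0) + \sum_{i=1}^{3}\dim V(\psi_i) = 2\dim V(\chi_0) + \dim V,
\end{equation*}
which is precisely $2 t_2(X(W_\infty)) + t_2(X(\fM_\infty))$, as desired. The only delicate point is to make sure that applying Lemma \ref{lem:prelim}(iii) to the composite extension $\fM_\infty/W_\infty$ really gives $V_G \cong X(W_\infty)\otimes\BQ_2$; I would verify this by applying the lemma twice, once to $\fM_\infty/\fW_{1,\infty}$ to get $V_{\Delta_1} \cong X(\fW_{1,\infty})\otimes\BQ_2$ as $\Gal(\fW_{1,\infty}/W_\infty)$-modules, then to $\fW_{1,\infty}/W_\infty$, composing to obtain the identification at the level of $G$-coinvariants. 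Beyond this bookkeeping, the argument is a standard character computation, and no genuine obstacle is anticipated.
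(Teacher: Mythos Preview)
Your proof is correct and follows essentially the same approach as the paper: both tensor $X(\fM_\infty)$ with $\BQ_2$, decompose the resulting representation of the Klein four group into its four isotypic pieces, and use Lemma~\ref{lem:prelim}(iii) (applied twice, once for each step in a tower $W_\infty \subset \fW_{i,\infty} \subset \fM_\infty$) to identify the relevant (co)invariants with $X(\fW_{i,\infty})\otimes\BQ_2$ and $X(W_\infty)\otimes\BQ_2$. Your explicit verification that the restriction isomorphism of Lemma~\ref{lem:prelim}(iii) is equivariant for the residual Galois action, so that the two applications compose correctly, is a point the paper leaves implicit.
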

\begin{proof}  Since we are assuming that $X(W_\infty)$ is a finitely generated $\BZ_2$-module, the same is true for $X(\fM_\infty)$ because $\Gal(\fM/W)$ is a $2$-group. Let $V = X(\fM_\infty)\otimes_{\BZ_2} \BQ_2$, so that the $\BQ_2$-dimension of $V$ is equal to $t_2(X(\fM_\infty))$. Put $\Omega = \Gal(\fM_\infty/W_\infty)$. Then 
\begin{equation*}\label{eq:prelim2}
V = \oplus_{\chi} V^{e_\chi},
\end{equation*}
where the sum runs over the characters $\chi$ of $\Omega$, and  $e_\chi = \sum_{\sigma\in \Omega} \chi(\sigma) \sigma$. Thus we must compute the $\BQ_2$-dimension of
each $V^{e_\chi}$. Note that if $Y$ is any vector space over $\BQ_2$ and $g$ is the non-identity element of a group $\Delta$ of order $2$ acting on Y, then plainly
\begin{equation}\label{4}
Y = Y^{1+g} \oplus Y^{1-g}, \, \, (Y)_\Delta = Y^{1+g}.
\end{equation}
Suppose first that $\chi$ is the trivial character of $\Omega$, so that $e_\chi = (1+\sigma)(1+\tau)$, where, of course, $\sigma$ and $\tau$ are distinct elements $\Omega$ of exact order $2$. By  the remark \ref {4} and (iii) of Lemma \ref{lem:prelim} applied to the group $\langle\sigma\rangle$ of order 2, we conclude that $V^{1+\sigma} = X(T_\infty)\otimes_{\BZ_2}\BQ_2$, where $T_\infty$ is the fixed field of $\sigma$. Thus
\begin{equation}\label{5}
V^{\chi} = (X(T_\infty)\otimes_{\BZ_2}\BQ_2)^{1+\tau} = X(W_\infty)\otimes_{\BZ_2}\BQ_2;
\end{equation}
here the second equality follows from applying the remark \ref{4} and (iii) of Lemma \ref{lem:prelim} to the quadratic extension $T_\infty/W_\infty$. Hence in this case the $\BQ_2$-dimension of $V^{\chi}$ is equal to the $\BQ_2$-dimension of $X(W_\infty)\otimes_{\BZ_2}\BQ_2$.
Suppose next that $\chi$ is a nontrivial character of $\Omega$, say with $\chi(\sigma) = 1$ and $\chi(\tau) = -1$, so that $e_\chi = (1+\sigma)(1-\tau)$.  If $T$ is now the fixed field of $\Ker (\chi )= \{1,\sigma\}$, we again conclude from the remark \ref{4} and (iii) of Lemma \ref{lem:prelim} that $V^{1+\sigma} = X(T_\infty)\otimes_{\BZ_2}\BQ_2$, whence
\begin{equation}\label{eq:prelim3}
V^{e_\chi} \cong (X(T_\infty)\otimes_{\BZ_2}{\BQ}_2)^{1-\tau}. 
\end{equation}
But applying (iii) of Lemma \ref{lem:prelim} to the extension $T_\infty/W_\infty$, we conclude that
$$
X(T_\infty)\otimes_{\BZ_2}{\BQ}_2/(X(T_\infty)\otimes_{\BZ_2}{\BQ}_2)^{1-\tau} = X(W_\infty)\otimes_{\BZ_2}\BQ_2,
$$
whence it follows that the $\BQ_2$-dimension of $V^{e_\chi}$ is equal to the $\BQ_2$-dimension of $X(T_\infty)\otimes_{\BZ_2}{\BQ}_2$ minus the $\BQ_2$-dimension of
$X(W_\infty)\otimes_{\BZ_2}\BQ_2$. Since this holds for each of the three non-trivial characters of $\Omega$, the proof of 
Lemma~\ref{lem:prelim3} is now complete.
\end{proof}

\section{Proof of Theorem \ref{thm:main}}

We now return to the situation discuused in $\S 1$. Thus from now on $K = \BQ(\sqrt{-q})$, where $q$ is any prime with $q \equiv 7 \mod 8$, and again the fields $F, F', D, J$ are defined by \eqref{b}. As always, $\fp$ will denote one of  the primes of $K$ lying above 2, and we again fix the sign of $\sqrt{-q}$ so that $\sqrt{-q} \equiv 1 \mod \fp^2$.  Recall that, for each finite exrension $W/K$,  the integer $s_\infty(W/K)$ is defined by \eqref{eq:defs} in $\S 2$. Again $R$ will denote an arbitrary element of $\CO_K$ satisfying the Twisting Hypothesis, and $s_\infty(R)$ will denote the number of primes of $K_\infty$ dividing $\sqrt{-q}R$. 
%We now give the proof of Theorem \ref{thm:main}. 

\begin{lem}\label{lem:ram1} 
We have $s_\infty(F/K) = s_\infty(R)$ for all primes $q \equiv 7 \mod 8$. Moreover, If $q\equiv 7\bmod 16$, we also have $s_\infty(F'/K)=s_\infty(R) +1$ and $s_\infty(D/K)=1$.
\end{lem}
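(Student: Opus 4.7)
The plan is to determine the ramification of the three quadratic extensions $F/K$, $F'/K$ and $D/K$ at every prime of $K$, transfer that data up the $\BZ_2$-tower to $K_\infty$, and count the primes of $K_\infty$ above $\fp^*$ by invoking the formula from the introduction.

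First, for any prime $\fw\nmid 2\CO_K$ the quadratic extension $K(\sqrt\alpha)/K$ is ramified at $\fw$ iff $\ord_\fw(\alpha)$ is odd. The Twisting Hypothesis on $R$ then tells us that the primes of $K$ outside $\{\fp,\fp^*\}$ ramified in $F/K$ or $F'/K$ are exactly $\fq$ together with the primes of $K$ dividing $R$, while $D/K$ is unramified at all such primes.

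Next I would check ramification at $\fp^*$ by computing the radicand of each extension modulo $8\BZ_2$. The sign convention $\iota_\fp(\sqrt{-q})\equiv 1\bmod 4\BZ_2$ gives $\iota_{\fp^*}(\sqrt{-q})=-\iota_\fp(\sqrt{-q})$, and the hypothesis $q\equiv 7\bmod 16$ (together with $-q\equiv 9\bmod 16$) forces $\iota_\fp(\sqrt{-q})\equiv 5\bmod 8$, so $\iota_{\fp^*}(\sqrt{-q})\equiv 3\bmod 8$. Combined with $\iota_{\fp^*}(R)\in\{1,5\}\bmod 8$ (from $R\equiv 1\bmod 4\CO_K$), a short case analysis yields $\iota_{\fp^*}(-\sqrt{-q}R)\in\{1,5\}\bmod 8$ (either a square in $\BQ_2^\times$ or the class generating the unramified quadratic extension $\BQ_2(\sqrt 5)$), so $F/K$ is unramified at $\fp^*$; while $\iota_{\fp^*}(\sqrt{-q}R)\in\{3,7\}\bmod 8$ and $\iota_{\fp^*}(-1)\equiv 7\bmod 8$ show that $F'/K$ and $D/K$ are ramified at $\fp^*$. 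Since $K_\infty/K$ is unramified at every prime $\fw\ne\fp$, each prime of $K_\infty$ above a ramified prime of $K$ in one of the quadratic extensions is itself ramified in the corresponding $\BZ_2$-compositum. Putting the preceding together,
\[
s_\infty(F/K)=s_\infty(R),\qquad s_\infty(F'/K)=s_\infty(R)+n_{\fp^*},\qquad s_\infty(D/K)=n_{\fp^*},
\]
where $n_{\fp^*}$ denotes the number of primes of $K_\infty$ lying above $\fp^*$.

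The hard part will be to show that $n_{\fp^*}=1$ when $q\equiv 7\bmod 16$. For this I plan to apply the formula from the introduction: if $w\in\CO_K$ generates $(\fp^*)^h$ and is normalized by $\iota_\fp(w)\equiv 1\bmod 4$, then $n_{\fp^*}=2^{\ord_2(\iota_\fp(w)-1)-2}$, so it suffices to verify $\iota_\fp(w)\equiv 5\bmod 8$. Write $\pi\in\CO_K$ for a generator of $\fp^h$; then $w=\epsilon\bar\pi$ for a suitable sign $\epsilon\in\{\pm 1\}$, and with $\pi=(a+b\sqrt{-q})/2$ (so $a,b\in\BZ$, $a\equiv b\bmod 2$ and $a^2+qb^2=2^{h+2}$) and $s=\iota_\fp(\sqrt{-q})$ one has $\iota_\fp(\bar\pi)=(a-bs)/2$. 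The relation $\ord_2(\iota_\fp(\pi))=h$ translates to $\ord_2(a+bs)=h+1$, forcing $a\equiv -bs\equiv 3b\bmod 8$ when $h\geq 2$ (using $s\equiv 5\bmod 8$). Together with a $\bmod 16$ lift of $a^2+qb^2=2^{h+2}$ and the normalizing sign $\epsilon$, this will yield $\iota_\fp(w)\equiv 5\bmod 8$, as required; the case $h=1$ (i.e.\ $q=7$) may be verified directly. Combining all of the above gives the three assertions of the lemma.
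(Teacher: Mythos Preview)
Your overall strategy is sound and matches the paper's: determine which primes of $K$ outside $\fp$ ramify in each quadratic extension, lift to $K_\infty$, and then count primes of $K_\infty$ above $\fp^*$. Two points deserve comment.

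\textbf{A small gap in the first assertion.} You carry out the $\fp^*$-analysis only under the hypothesis $q\equiv 7\bmod 16$, but the statement $s_\infty(F/K)=s_\infty(R)$ is claimed for \emph{all} $q\equiv 7\bmod 8$, so you still need $F/K$ unramified at $\fp^*$ when $q\equiv 15\bmod 16$. This is easily repaired: a $\bmod\,4$ computation suffices uniformly. Since $\iota_\fp(\sqrt{-q})\equiv 1\bmod 4$ one has $\iota_{\fp^*}(-\sqrt{-q})\equiv 1\bmod 4$, and together with $\iota_{\fp^*}(R)\equiv 1\bmod 4$ this gives $\iota_{\fp^*}(-\sqrt{-q}R)\equiv 1\bmod 4$, which already forces the local extension at $\fp^*$ to be unramified. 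The paper simply asserts this step as ``clear'' for exactly this reason.

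\textbf{The inertness of $\fp^*$.} Here your route diverges from the paper. The paper does not compute $n_{\fp^*}$ at all: it invokes \cite[Proposition~2.4]{CLL}, which proves directly that $\fp^*$ is inert in $K_\infty$ whenever $q\equiv 7\bmod 16$. Your plan to establish $\iota_\fp(w)\equiv 5\bmod 8$ via the explicit formula from the introduction is legitimate, but the sketch is incomplete. You should justify that in the representation $\pi=(a+b\sqrt{-q})/2$ one necessarily has $a,b$ odd (this follows since a generator in $\BZ[\sqrt{-q}]$ would force $\iota_\fp(\pi)$ and $\iota_{\fp^*}(\pi)$ to have the same parity), and the ``$\bmod\,16$ lift'' you allude to will in general require knowing $s\bmod 16$, which depends on $q\bmod 32$; the case $h\ge 2$ thus splits into subcases you have not written out. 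The computation can be pushed through, but it is noticeably longer than citing the known result. If you wish to keep the argument self-contained, it would be cleaner to prove the inertness of $\fp^*$ via class field theory (as in \cite{CLL}) rather than through the generator formula.
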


\begin{proof}
The first assertion is clear because the primes of $K_\infty$, not lying above $\fp$, which ramify in $F_\infty$ are precisely the primes dividing $\sqrt{-q}R$. Note that $\fp^*$ is ramified in $F'/K$ and in $D/K$ by our choice of sign. Then the second assertion follows from the fact that $\fp^*$ is inert in $K_\infty$ when $q \equiv 7\bmod 16$ (see \cite[Proposition 2.4]{CLL}).
\end{proof}

%\begin{proof}
%Since $h$ is odd, $\fp$ is totally ramified in $K_\infty$. It follows that there are precisely $s_\infty(R)+1$ primes of $K_\infty$ dividing $\fp \sqrt{-q}R$. Note that $F/K$ is unramified at $\fp^*$, since by our choice, the embedding $\iota_{\fp^*}: K\to \BQ_2$ corresponding to $\fp^*$ must satisfy $\iota_{\fp^*}(\sqrt{-q})\equiv 3\bmod 4\BZ_2$, whence $\iota_{\fp^*}(-\sqrt{-q}R)\equiv 1\bmod 4$ by the definition of $R$. Thus $F_\infty/K_\infty$ is unramified outside the primes dividing $\fp \sqrt{-q} R$.
%
%We next claim that $\fp$ is totally ramified in $F_\infty/K$. It clearly suffices to show that $\fp$ is totally ramified in $F_1/K$. We have $K_1=K(\sqrt{\pi})$, where $\pi$ is some generator of $\fp^h$ (see \cite[Corollary 2.5]{CLL}), whence $\iota_\fp(\pi) = 2^h u$ for some $u\in \BZ^\times_2$. Also $\iota_\fp(-\sqrt{-q}R)\equiv 1 \bmod 4\BZ_2$. Since $F_1= KF_1$ and $h$ is odd, it follows that 
%\begin{equation*}
%F_1 K_{\fp} = \BQ_2(\sqrt{2u}, \sqrt{ -v })  \quad   \text{ with } v=1 \text{ or }5. 
%\end{equation*}
%This is a totally ramified biquadratic extensions of $\BQ_2$, since clearly all three quadratic subfields are ramified over $\BQ_2$. Thus $F_1/K$ is totally ramified at $\fp$. On the other hand, the $s_\infty(R)$ primes of $K_\infty$ dividing $\sqrt{-q}R$ are plainly ramified in $F_\infty$, and the proof is complete. \end{proof}

\begin{prop}[Choi-Coates\cite{CC}]\label{prop:Xbound}
(i) $X(F_\infty)$ is a free finitely generated $\BZ_2$-module of rank at most $s_\infty(R)-1$, and (ii) if $q\equiv 7\bmod 16$, then $X(D_\infty)=0$ and $X(F'_\infty)$ is a free finitely generated $\BZ_2$-module of rank at most $s_\infty(R)$.
\end{prop}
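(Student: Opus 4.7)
The plan is to deduce both parts from the Section~2 machinery, using as input the standard vanishing $X(K_\infty)=0$. The latter follows from class field theory: since $K=\BQ(\sqrt{-q})$ has odd class number and $\fp$ is totally ramified in $K_\infty/K$, a direct computation with the local units at $\fp$ modulo global units shows that the maximal abelian $2$-extension of $K$ unramified outside $\fp$ equals $K_\infty$; taking coinvariants and Nakayama gives $X(K_\infty)=0$.

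Granting this, I first apply Lemma~\ref{lem:prelim}(i) to each of the quadratic extensions $K\subset F$, $K\subset F'$, $K\subset D$ to see that $X(F_\infty)$, $X(F'_\infty)$, $X(D_\infty)$ are all finitely generated as $\BZ_2$-modules. For the vanishing $X(D_\infty)=0$ when $q\equiv 7\bmod 16$: Lemma~\ref{lem:ram1} supplies $s_\infty(D/K)=1$, and then Lemma~\ref{lem:prelim}(ii) with $W=K$, $\fW=D$ forces $(X(D_\infty))_\Delta=0$, i.e., $(\delta-1)X(D_\infty)=X(D_\infty)$. Reducing mod $2$ and noting that $\delta-1$ is nilpotent on any $\BF_2[\Delta]$-module (since $\Delta$ has order $2$), we conclude $X(D_\infty)/2X(D_\infty)=0$; Nakayama over $\BZ_2$ then gives $X(D_\infty)=0$.

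For the rank bounds on $X(F_\infty)$ and $X(F'_\infty)$ the plan is to invoke Lemma~\ref{lem:prelim2} with $\fW/W$ equal to $F/K$ and $F'/K$ respectively; combined with Lemma~\ref{lem:ram1} ($s_\infty(F/K)=s_\infty(R)$ and, under $q\equiv 7\bmod 16$, $s_\infty(F'/K)=s_\infty(R)+1$), this delivers the bounds $s_\infty(R)-1$ and $s_\infty(R)$ on the $\BZ_2$-ranks of the (free) modules, as required.

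The main obstacle is verifying the $\BZ_2$-torsion-freeness hypothesis of Lemma~\ref{lem:prelim2} for $X(F_\infty)$ and $X(F'_\infty)$. My plan is to appeal to Greenberg's theorem (in its $S$-ramified form). First, since $F/K$ (resp.\ $F'/K$) is ramified at a prime away from $\fp$, we have $F\cap K_\infty=K$ (resp.\ $F'\cap K_\infty=K$), so $F_\infty/F$ and $F'_\infty/F'$ are genuine $\BZ_2$-extensions; and the total ramification of $\fp$ in $K_\infty/K$ pulls back to show that every prime of $F$ (resp.\ $F'$) above $\fp$ is totally ramified in $F_\infty/F$ (resp.\ $F'_\infty/F'$). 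Greenberg's theorem then ensures that $X(F_\infty)$ has no nonzero finite $\Lambda$-submodule, where $\Lambda=\BZ_2[[\Gal(F_\infty/F)]]$. Because $X(F_\infty)$ is already finitely generated over $\BZ_2$, any nonzero $\BZ_2$-torsion element would, via its $\Gamma$-orbit, generate a $\Lambda$-submodule contained in the finite $\BZ_2$-torsion subgroup—contradicting Greenberg. Hence $X(F_\infty)$, and by the same argument $X(F'_\infty)$, is $\BZ_2$-torsion-free, and Lemma~\ref{lem:prelim2} applies to conclude.
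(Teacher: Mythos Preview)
Your proof is correct and follows essentially the same route as the paper: establish $X(K_\infty)=0$, use Lemma~\ref{lem:prelim} for finite generation, invoke Greenberg's theorem for torsion-freeness, and then apply Lemma~\ref{lem:prelim2} together with the ramification counts of Lemma~\ref{lem:ram1}. The only cosmetic difference is that for $X(D_\infty)=0$ you argue directly via Lemma~\ref{lem:prelim}(ii) and Nakayama over $\BZ_2[\Delta]$, whereas the paper obtains it as the special case $s_\infty(D/K)-1=0$ of Lemma~\ref{lem:prelim2}; these are equivalent.
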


\begin{proof}
It is easy to see $X(K_\infty)=0$. (for example see \cite[Lemma 3.2]{CL}). Then by Lemma~\ref{lem:prelim} $X(F_\infty)$, $X(D_\infty)$ and $X(F'_\infty)$ are all finitely generated over $\BZ_2$. Moreover, by Greenberg's theorem \cite{GR}, they are torsion-free whence are free of finite rank. The assertions then follow from Lemma~\ref{lem:ram1} and Lemma~\ref{lem:prelim2}. 
\end{proof}

%\begin{proof} We only briefly sketch the proof, which hinges on Nakayama's lemma, since it is entirely similar to the proof for $q = 7$ given in \cite{CC} (see also the first part of \S{3} of \cite{CL}). Put $G = \Gal(F_\infty/K)$, so that $G = \Gamma \times \Delta$, where $\Gamma = \Gal(F_\infty/F) \cong \BZ_2$ and $\Delta$ is cyclic of order $2$. Then one shows by a simple ramification argument (see Prop. 2.5 of \cite{CC}) that $(X(F_\infty))_\Delta$ is a $\BF_2$-vector space of dimension at most $s_\infty(R) - 1$, whence, viewing $X(F_\infty)$
%as a module over the group ring $\BZ_2[\Delta]$, it follows from Nakayama's lemma, that $X(F_\infty)$ is finitely generated  over $\BZ_2$. Moreover, an old theorem of Greenberg \cite{GR}
%shows that $X(F_\infty)$ must then be torsion free, whence by a remark of Sharifi, the fact that $(X(F_\infty))_\Delta = \BF_2^r$ with $\Delta$ of order 2 implies that $X(F_\infty)$ is free of rank $r$, completing this sketch of the proof.
%\end{proof}

\begin{prop}\label{prop:ray1}
Let $\fC_n$ be the ray class group of $K_n$ modulo $(\fp^*)^2\CO_{K_n}$. Then $\fC_n$ has odd order for all $n\geq 0$.
\end{prop}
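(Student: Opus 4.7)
The plan is to use the defining exact sequence of the ray class group,
\[
\CO_{K_n}^* \xrightarrow{\phi_n} (\CO_{K_n}/(\fp^*)^2 \CO_{K_n})^* \to \fC_n \to \Cl(K_n) \to 0,
\]
which reduces the claim to verifying (a) that $\Cl(K_n)$ has odd order, and (b) that the cokernel of $\phi_n$ has odd order.

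For (a), I would invoke the vanishing $X(K_\infty)=0$ (noted in the proof of Proposition~\ref{prop:Xbound}), so that $M(K_\infty) = K_\infty$. Any unramified abelian $2$-extension of $K_n$ lies in the maximal abelian $2$-extension $M_n$ of $K_n$ unramified outside $\fp$; but $M_n K_\infty/K_\infty$ is an abelian $2$-extension of $K_\infty$ unramified outside the primes above $\fp$, so $M_n K_\infty \subseteq M(K_\infty)=K_\infty$, and hence $M_n \subseteq K_\infty$. Since $K_\infty/K_n$ is totally ramified at $\fp$, the only finite subextension unramified at $\fp$ is $K_n$ itself, so $\Cl(K_n)$ has trivial $2$-part.

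For (b), I would first observe that $\fp^*$ is unramified in $K_n/K$, so $(\fp^*)^2\CO_{K_n}=\prod_{\fP \mid \fp^*}\fP^2$, and the $2$-Sylow of $(\CO_{K_n}/(\fp^*)^2\CO_{K_n})^*$ factors as $\prod_{\fP} (1+\fP)/(1+\fP^2)$. A nontrivial element of order $2$ in the cokernel of $\phi_n$ would correspond, via class field theory, to a nontrivial quadratic abelian extension $L/K_n$ unramified outside $\fp^*$ with conductor dividing $(\fp^*)^2$. Using (a) and Kummer theory, one writes $L=K_n(\sqrt{\alpha})$ with $\alpha$ (after squares) equal to a unit times a product of uniformizers at $\fp^*$-primes with exponents in $\{0,1\}$; the conditions of being unramified at each $\fp$-prime and of bounded conductor at each $\fp^*$-prime then translate into local constraints on $\alpha$ which I would show are incompatible with $L\neq K_n$.

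The main obstacle is completing step (b). A promising route is to pass to the compositum $LK_\infty/K_\infty$ and exploit $X(K_\infty)=0$ together with the techniques of Section~2 to force triviality; this may be supplemented by a genus-theoretic argument in the cyclic $2$-extension $K_n/K$ anchored by the base-level computation $\fC_0 \cong \Cl(K)$, which is already odd since $\CO_K^*=\{\pm 1\}$ surjects onto $(\CO_K/(\fp^*)^2)^* = (\BZ/4)^*$.
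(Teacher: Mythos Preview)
Your reduction via the ray class group exact sequence is sound, and step (a) is correct: $X(K_\infty)=0$ forces the $2$-Hilbert class field of $K_n$ to lie in $K_\infty$, and total ramification of $K_\infty/K_n$ at $\fp$ then kills it.

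The gap is in step (b). Your first suggested route---passing to $LK_\infty/K_\infty$ and invoking $X(K_\infty)=0$---does not work, because $X(K_\infty)$ governs abelian $2$-extensions of $K_\infty$ unramified outside the primes above $\fp$, whereas the hypothetical $L/K_n$ coming from a $2$-torsion class in $\fC_n$ is unramified outside the primes above $\fp^*$. These are disjoint sets of primes, so $M(K_\infty)=K_\infty$ places no constraint on $LK_\infty$. (Indeed, the maximal abelian $2$-extension of $K_\infty$ unramified outside $\fp^*$ is genuinely infinite: it contains the other $\BZ_2$-extension of $K$.) Likewise, a direct Kummer/local analysis runs into the difficulty that once the residue degree of $\fp^*$ in $K_n$ exceeds $1$, the $2$-Sylow of $(\CO_{K_n}/(\fp^*)^2\CO_{K_n})^\times$ is $(\BZ/2)^f$ with $f>1$, and the single global unit $-1$ no longer suffices; one would have to control the images of the full unit group $\CO_{K_n}^\times$, which has $\BZ$-rank $2^n-1$, and there is no obvious handle on this.

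Your second suggestion---a genus-theoretic argument in the cyclic $2$-extension $K_n/K$, anchored at $\fC_0$---is exactly what the paper does, and is the efficient route. Concretely, the paper applies Chevalley's ambiguous class number formula for ray class groups to $(K_n/K,\fm)$ with $\fm=(\fp^*)^2$. Since $\CO_{K,\fm}^\times=\{1\}$ and $\fp$ is the only ramified prime (contributing $e_\fp=2^n$ which cancels $[K_n:K]$), the formula reduces to
\[
\#\bigl((\fC_n)^{\Gal(K_n/K)}\bigr)=\#(\fC_0)\cdot\bigl[1+4\BZ_2:N_{L_w/\BQ_2}(1+4\CO_w)\bigr],
\]
where $L_w$ is the (unramified) completion of $K_n$ at a prime above $\fp^*$. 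The key lemma is that this norm index is $1$: for any finite unramified $k/\BQ_2$ and $m\ge 2$, the $2$-adic logarithm identifies $N_{k/\BQ_2}$ on $1+2^m\CO_k$ with the trace on $2^m\CO_k$, which is surjective onto $2^m\BZ_2$. Hence $\#((\fC_n)^{\Gal(K_n/K)})=\#(\fC_0)$, and your own base-level computation gives $\#(\fC_0)$ odd; Nakayama for the cyclic $2$-group $\Gal(K_n/K)$ then finishes. So the missing ingredient in your outline is precisely this norm computation, not any use of $X(K_\infty)$.
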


\noindent Before giving the proof, we recall Chevalley's formula for ray class groups (see \cite{Gras} or \cite[\S 4]{LY}, for example). Suppose that $L/k$ is any cyclic extension of number fields with Galois group $G$, and write $\CO_L$ (resp. $\CO_k$) for the ring of integers of $L$ (resp. $k$).  For our application, we may assume that $k$ has no real places.
If $\fm$ is an integral ideal in $k$, let $\Cl^\fm_{L}$ (resp. $\Cl^\fm_k$) denote the ray class group of $L$ (resp. $k$) modulo $\fm \CO_L$ (resp. $\fm$). Clearly $\Cl^\fm_L$ is a $G$-module, and $(\Cl^{\fm}_L)^G$ will denote, as always, its $G$-invariants. For a prime $v$ (resp. $w$) of $k$ (resp. $L$), write $\CO_v$ (resp. $\CO_w$) for the ring of integers of the completion $k_v$ (resp. $L_w$). Let $L^\fm$ denote the set of all $a \in L^\times$ such that $\ord_w(a-1) \geq \ord_w(\fm)$ for all places $w$ of $L$ dividing $\fm$. Let $\CO^\times_{k,\fm}$ be the subgroup of the unit group $\CO_k^\times$ of $k$ consisting of all units $a$ with $\ord_v(a-1) \geq \ord_v(\fm)$ at all places $v$ of $k$ dividing $\fm$. Then Chevalley's formula with respect to $L/k$ and the ideal $\fm$ is as follows:-
\begin{equation}\label{eq:chevalley}
\frac{\# ((\Cl^{\fm}_L)^G)}{\# (\Cl^\fm_k) } = \frac{\prod\limits_{v\mid \fm} [1+\fm \CO_v: N_{L_w/k_v}(1+\fm \CO_w)] \cdot \prod\limits_{v\nmid \fm} e_v}{[L:k] [\CO^\times_{k,\fm}: \CO^\times_{k,\fm} \cap N_{L/k}(L^\fm)]};
\end{equation}
here $N_{L/k}$ (resp. $N_{L_w/k_v}$) is the norm map for $L/k$ (resp. $L_w/k_v$), and $e_v$ is the ramification index of $v$ in $L$. In the first product, $w$ is an arbitrary place of $L$ above $v$ and the group $N_{L_w/k_v}(1+\fm \CO_w)$ is independent of the choice of $w$.

\begin{lem}\label{lem:unram_norm}
If $k$ is a finite unramified extension of $\BQ_2$, then for every $m\geq 2$ we have 
$N_{k/\BQ_2}(1+ 2^m \CO_k ) =1+2^m \BZ_2$, where $\CO_k$ is the ring of integers in $k$.
\end{lem}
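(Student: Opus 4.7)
The plan is to pass through the $2$-adic logarithm and exponential in order to convert the multiplicative statement about the norm into an additive statement about the trace, where the required surjectivity is a classical consequence of $k/\BQ_2$ being unramified. The containment $N_{k/\BQ_2}(1+2^m\CO_k)\subset 1+2^m\BZ_2$ is immediate from the expansion
$$
\prod_\sigma(1+2^m\sigma(x))=1+2^m\tr_{k/\BQ_2}(x)+O(2^{2m})
$$
together with the observation that $N_{k/\BQ_2}$ already takes values in $\BZ_2$.

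For the reverse inclusion, I would invoke the standard fact that, because $k/\BQ_2$ has ramification index $1$ and $m\geq 2>1=e/(p-1)$, the formal series $\log$ and $\exp$ define mutually inverse isomorphisms of topological $\BZ_2$-modules
$$
\log:1+2^m\CO_k\xto{\sim}2^m\CO_k,\qquad \exp:2^m\CO_k\xto{\sim}1+2^m\CO_k,
$$
and the analogous isomorphisms hold with $\CO_k$ replaced by $\BZ_2$. Since $\log$ is defined by a power series with coefficients in $\BQ$, it commutes with every element of $\Gal(k/\BQ_2)$, and combining this with $N_{k/\BQ_2}=\prod_\sigma\sigma$ and $\tr_{k/\BQ_2}=\sum_\sigma\sigma$ gives the key identity
$$
\log\circ N_{k/\BQ_2}=\tr_{k/\BQ_2}\circ\log
$$
on $1+2^m\CO_k$.

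It therefore suffices to show $\tr_{k/\BQ_2}(2^m\CO_k)=2^m\BZ_2$, which is precisely the statement that the different $\fd_{k/\BQ_2}$ is trivial; this holds because $k/\BQ_2$ is unramified. Applying $\exp$ then yields
$$
N_{k/\BQ_2}(1+2^m\CO_k)=\exp\bigl(\tr_{k/\BQ_2}(2^m\CO_k)\bigr)=\exp(2^m\BZ_2)=1+2^m\BZ_2,
$$
as required. The hypothesis $m\geq 2$ enters, and only enters, in guaranteeing the convergence of $\log$ and $\exp$ on these higher unit groups at the prime $2$; this is the single technical point on which the argument truly relies, and without it one would be forced into an ad hoc inductive approximation along the filtration $(1+2^j\CO_k)/(1+2^{j+1}\CO_k)$.
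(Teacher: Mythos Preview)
Your argument is correct and is essentially the same as the paper's: both reduce the norm statement to surjectivity of the trace via the $2$-adic logarithm (the paper phrases this as a commutative square with $\log$ on both horizontal arrows, you write out $\log\circ N_{k/\BQ_2}=\tr_{k/\BQ_2}\circ\log$ and invert with $\exp$), and both invoke unramifiedness to get $\tr_{k/\BQ_2}(\CO_k)=\BZ_2$. The only cosmetic difference is that you separately verify the easy containment $N_{k/\BQ_2}(1+2^m\CO_k)\subset 1+2^m\BZ_2$, whereas the paper's diagram handles both directions at once.
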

\begin{proof}
Let $\mathrm{Tr}_{k/\BQ_2}$ be the trace map from $k$ to $\BQ_2$.
Let $\log$ be the usual $2$-adic logarithm map. 
It commutes with the Galois action. For $m\geq 2$, we then have a commutative diagram:
\[
\begin{tikzcd}
1+2^m \CO_k \ar[r,"\log"] \ar[d, "N_{k/\BQ_2}"'] & 2^m \CO_k \ar[d, "\mathrm{Tr}_{k/\BQ_2}"] \\
1+2^m \BZ_2 \ar[r, "\log"]  & 2^m \BZ_2.
\end{tikzcd}
\]
The two horizontal maps are isomorphisms as $m\geq 2$. The right vertical map is a surjection because $k/\BQ_2$ is unramified. The lemma then follows.
\end{proof}

\begin{proof}[Proof of Proposition~\ref{prop:ray1}]
 We apply \eqref{eq:chevalley} with respect to the cyclic extension $(K_n/K, \fm)$, with $\fm=(\fp^*)^2$. As 
 $\CO^\times_{K,\fm} =\{1\}$, then, writing $v=\fp^*$, the formula gives
 \begin{equation}
\# ((\fC_n)^{\Gal(K_n/K)}) =\#(\fC_0) \cdot [1+\fm \CO_{v}: N(1+\fm \CO_w)] =\#(\fC_0).
 \end{equation}
The second equality follows from Lemma~\ref{lem:unram_norm}. Using the fact that the class number of $K$ is odd, and that the units of $K$ are $\{\pm1\}$, it is readily verified that $\#(\fC_0)$ is odd. Therefore $\#( \fC_n)$ is odd by Nakayama's lemma, since $\Gal(K_n/K)$ is cyclic of order $2^n$. \end{proof}

\begin{proof}[Proof of Theorem~\ref{thm:main}]
Take $n$ to be an integer which is sufficiently large to ensure that there are precisely $s_\infty(R)$ primes of $K_n$ dividing $\sqrt{-q}R$. These primes are then all inert in $K_\infty$. 
To simplify notation, we shall write $s = s_\infty(R)$ in the rest of this proof. We have
\begin{equation}\label{e}
\sqrt{-q}R \CO_{K_n} = \fa_1\cdots \fa_s,
\end{equation}
where each $\fa_i$ is the power of some prime ideal of $K_n$, and $(\fa_i, \fa_j) = 1$ when $i \neq j$. Note that the exponents to which these prime ideals occur in \eqref{e} are all odd by the definition of $R$ and the fact $(\fq, R) = 1$. Write $h'$ for the order of $\fC_n$, which is odd by Proposition \ref{prop:ray1}.  Thus there exist $\alpha_i \in \CO_{K_n}$ such that 
\begin{equation}\label{eq:XF}
\fa^{h'}_i =(\alpha_i)  \quad\text{ and }   \quad \alpha_i \equiv 1\bmod (\fp^*)^2 \CO_{K_n} \quad \text{ for } 1\leq i\leq s-1.
\end{equation}
 We define
\begin{equation}\label{eq:alpha_rQ}
 \alpha_{s} = \frac{(-\sqrt{-q}R)^{h'}}{\alpha_1\cdots \alpha_{r-1}}.
\end{equation}
Note that, thanks to the oddness of $h'$ and our choice of the sign, we also have $\alpha_{s} \equiv 1\bmod  (\fp^*)^2 \CO_{K_n}$. Again because $h'$ is odd, it follows that, for $1\leq i\leq s$, the extension $K_n(\sqrt{\alpha_i})/K_n$ is quadratic and is ramified at the unique prime dividing $\alpha_i$.
 Moreover, this extension is unramified at the primes above $\fp^*$, since it is obtained by adjoining a root of
$x^2-x+\frac{1-\alpha_i}{4}$ and this polynomial is separable modulo each prime of $K_n$ above $\fp^*$. Define
\begin{equation}\label{eq:def_T}
T=K_\infty(\sqrt{\alpha_1},\cdots,  \sqrt{\alpha_{s}}).
\end{equation} 
Now $K_\infty(\sqrt{\alpha_i})/K_\infty$ is ramified at the unique prime dividing $\alpha_i$, while it is unramified at the primes dividing $\alpha_j$ for $j\neq i$, whence it follows easily that $[T:K_\infty]=2^s$, and
\begin{equation*}
\Gal(T/K_\infty)\cong (\BZ/2\BZ)^s.
\end{equation*}
 Thanks again to the oddness of $h'$, it follows from \eqref{eq:alpha_rQ} that
 \begin{equation}\label{eq:X(F)2}
 F_\infty \subset T  \quad \text{ and} \quad \Gal(T/F_\infty)  \cong (\BZ/2\BZ)^{s-1}.
\end{equation}
The extension $T/F_\infty$ is unramified at primes not dividing $\fp \fa_1 \cdots \fa_s$, since $T/K_\infty$ is. 
Moreover, $T/F_\infty$ is also unramified at the prime dividing $\fa_i$ for $1\leq i\leq s$, since from \eqref{eq:def_T} the ramification index of this prime in $T/K_\infty$ is clearly equal to $2$, but $F_\infty/K_\infty$ is also ramified at this prime by Lemma~\ref{lem:ram1}. This proves $T\subset M(F_\infty)$. Hence $X(F_\infty)$ has $(\BZ/2\BZ)^{s-1}$ as a quotient, and so its $\BZ_2$-rank must be at least $s-1$. Combining this with the upper bound given by Proposition~\ref{prop:Xbound}, the proof of assertion (i) of Theorem~\ref{thm:main} is complete.

Assume for the rest of the proof that $q\equiv 7\bmod 16$. By Proposition~\ref{prop:Xbound}, it suffices to construct $s+1$ independent quadratic extensions of $F'_\infty$ contained in $M(F'_\infty)$. Define
\begin{equation}\label{eq:def_T2}
T'= T(\sqrt{-1})= K_\infty(\sqrt{-1},\sqrt{\alpha_1},\cdots,  \sqrt{\alpha_{s}}).
\end{equation}
Note that $T'\supset F'_\infty$. Since $F'(\sqrt{-1})/F'$ is unramified at $\fp$, we have $F'_\infty(\sqrt{-1})\subset M(F'_\infty)$. Moreover, by the same argument as in the first paragraph, we have $T\subset M(F'_\infty)$. Thus $T'\subset M(F'_\infty)$, and so it remains to show that $[T':F'_\infty]=2^s$.
Note that $T/K_\infty$ is unramified at the unique prime above $\fp^*$, but $K_\infty(\sqrt{-1})/K_\infty$ is ramified at this prime. It follows that $T'\neq T$ whence $[T':K_\infty]=2^{s+1}$ and therefore $[T':F'_\infty]=2^s$. This proves $X(F'_\infty) \cong \BZ^s_2$, as required.

Finally, since $J/K$ is a biquadratic  extension, with $F,F'$ and $D$ being the nontrivial intermediate fields, it follows from Lemma~\ref{lem:prelim3} and Proposition~\ref{prop:Xbound} that $X(J_\infty)\cong \BZ^{2s-1}_2$. This completes the proof of Theorem~\ref{thm:main}. \end{proof}

\section{Equivalent formulation of Theorem \ref{thm:main} in terms of infinite descent on certain abelian varieties with complex multiplication}

The aim of this section is to briefly explain an equivalent formulation of Theorem \ref{thm:main} in terms of infinite descent on a family of quadratic twists of a certain higher dimensional
abelian variety with complex multiplication. We omit detailed proofs, and refer the reader to \cite{Gross1},\cite{CL} for further explanations of the background material. Fix an embedding of $K$ in $\BC$. Let $H = K(j(\CO_K))$ be the Hilbert class field of $K$, where $j$ is the classical modular function, and let $A/\BQ(j(\CO_K))$ be the Gross elliptic curve with complex multiplication by $\CO_K$ (see \cite{Gross1}, Chap. 5). Thus $A$ has minimal discriminant $(-q^3)$, and $A$ is isogenous over $H$ to all of its conjugates. Let 
$$
B=\mathrm{Res}^{H}_{K} A
$$
 be the $h$-dimensional abelian variety over $K$ which is the restriction of scalars of $A$ from $H$ to $K$.  Let $\msb = \End_K(B)$, and  $\mst = \msb \otimes \BQ$, so that $\mst$ is a CM field of degree $h$ over $K$. Then $\msb$ is an order in $\mst$, which is ramified over $\CO_K$ at precisely the primes dividing $h$ (see \cite{Gross1}, Theorem 15.2.5).  In particular, since $h$ is odd, the primes $\fp$, and $\fp^*$ are both unramified in $\mst$.  Now the torsion subgroup of $B(K)$ is 
$\CO_K/2\CO_K$, and the action of $\msb$ on this torsion subgroup  gives an $\CO_K$-algebra surjection from $\msb$ onto $\CO_K/2\CO_K$, whose kernel is the product of two conjugate primes $\fP, \fP^*$ of $\msb$ lying above $\fp, \fp^*$, respectively. These primes  are both unramified in $\msb$,  and have residue fields equal to the field $\BF_2$ with $2$ elements. Our equivalent formulation of Theorem \ref{thm:main} will be in terms of the arithmetic of the abelian variety $B^{(R)}/K$, which is the twist of $B$ by the quadratic extension $K(\sqrt{R})/K$. For each $n \geq 1$, let $B^{(R)}_{\fP^n}$ be the Galois module of $\fP^n$-division points on $B^{(R)}$. Then we have the following interpretation of the fields defined in $\S1$.

\begin{thm}\label{t1} We have 
\begin{align*}
 & F = K(B^{(R)}_{\fP^2}),  \quad F' = K(B^{(R)}_{{\fP^*}^2}),   \quad J = FF'= K(B^{(R)}_{\fP^2}, \sqrt{-1}) \\
 & F_\infty = K(B^{(R)}_{\fP^\infty}), \quad  F'_\infty = K(B^{(R)}_{{\fP^*}^\infty}),   \quad J_\infty = K( B^{(R)}_{\fP^\infty}, \sqrt{-1}).
\end{align*}
Moreover, $B^{(R)}$ has good reduction everywhere over $F$ and $F'$.
\end{thm}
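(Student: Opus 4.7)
The plan is to reduce everything to the case $R=1$ via the standard twisting formalism, and then invoke the complex multiplication structure of $B$ (inherited from the Gross curve $A$). Because $\msb/\fP \cong \BF_2$, the Galois action on $B_{\fP^n}$ factors through an abelian character $\rho_n: G_K \to (\msb/\fP^n)^\times$, and passing to the inverse limit gives $\rho_\infty: G_K \to \msb_\fP^\times \cong \BZ_2^\times$. By the theory of complex multiplication (\cite{Gross1}, Ch.~17), $\rho_\infty$ is crystalline at $\fp$ with inertia acting via the Lubin--Tate character attached to the formal group of $B$ at $\fp$, unramified at $\fp^*$ (where $B$ has ordinary good reduction), and has conductor supported on $\fp \cdot \fq$. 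Consequently $K(B_{\fP^\infty}) = K_\infty \cdot K(B_{\fP^2})$, and analogously for $\fP^*$.

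For the explicit shape of $K(B_{\fP^2})$ and $K(B_{\fP^{*2}})$ in the untwisted case $R = 1$, I would invoke the computations of \cite{CLL} (see also \cite{CC}): under the sign convention $\iota_\fp(\sqrt{-q}) \equiv 1 \bmod 4\BZ_2$ fixed in \S 1, these fields are $K(\sqrt{-\sqrt{-q}})$ and $K(\sqrt{\sqrt{-q}})$ respectively, which matches \eqref{b} when $R = 1$.

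For general $R$ satisfying the Twisting Hypothesis, I would exploit the twist dictionary $B^{(R)}_{\fP^n} \cong B_{\fP^n} \otimes \chi_R$, where $\chi_R$ is the quadratic character of $K(\sqrt R)/K$. At level $n = 1$ the twist is trivial on $\BF_2$-lines, so $B^{(R)}_\fP$ remains $K$-rational. At level $n = 2$, the induced character on $B^{(R)}_{\fP^2}/B^{(R)}_\fP$ is the twisted version of the one appearing in the $R=1$ case, and a direct Kummer-theoretic comparison gives $K(B^{(R)}_{\fP^2}) = K(\sqrt{-\sqrt{-q}R}) = F$ and $K(B^{(R)}_{\fP^{*2}}) = K(\sqrt{\sqrt{-q}R}) = F'$. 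Then $J = FF'$ is the biquadratic compositum and contains $\sqrt{-1}$, since the product $\sqrt{-\sqrt{-q}R}\cdot \sqrt{\sqrt{-q}R}$ equals $\sqrt{-1}\cdot \sqrt{-q}\cdot R$ up to sign. The $\fP^\infty$, $\fP^{*\infty}$ statements follow by compositing with $K_\infty$, since the twist character has finite order and does not alter the $\BZ_2$-free component of $\rho_\infty$.

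Finally, for the good reduction assertion I would apply the N\'eron--Ogg--Shafarevich criterion prime by prime using an auxiliary $\ell \ne 2$. Outside $\fq$ and the primes dividing $R$, the abelian variety $B$ (and hence $B^{(R)}$) already has good reduction over $K$ itself. At primes of $K$ dividing $R$, the Twisting Hypothesis (odd valuations) forces ramification index $2$ in both $F/K$ and $F'/K$, which absorbs the ramification introduced by $\chi_R$ and restores good reduction. At primes of $F$ above $\fq$, the rationality of the nontrivial torsion quotient $B^{(R)}_{\fP^2}$ over $F$, together with the CM structure that makes $T_\ell B^{(R)}$ free of rank one over $\mst \otimes_\BQ \BQ_\ell$, trivializes the inertia action on $T_\ell B^{(R)}$; this is a standard CM-rigidity argument adapted to the Gross setting in \cite{Gross1}, Ch.~17. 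The same argument applies over $F'$. The main obstacle in executing this plan is the explicit radical identification of the Kummer generators in the second step: matching them precisely with $\sqrt{-\sqrt{-q}R}$ and $\sqrt{\sqrt{-q}R}$ (as opposed to some unit multiple) depends sensitively on Gross's explicit formulas for the $2$-division polynomial of $A$ and on the sign normalization of $\sqrt{-q}$ modulo $4$ fixed in \S 1.
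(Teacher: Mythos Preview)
Your plan is correct and follows essentially the same route as the paper, which in fact omits the proof entirely and simply refers to \cite[\S2 and Lemma~7.11]{CL} and \cite[Lemmas~2.1, 2.2]{CC}; your outline is a faithful expansion of what those references do, namely identify the mod-$4$ character $\rho_\fP \bmod 4$ explicitly in the untwisted case and then multiply by $\chi_R$.

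One small imprecision worth tightening: your good-reduction argument at primes $\fw \mid R$ says that ramification index $2$ in $F/K$ ``absorbs'' the ramification of $\chi_R$. As stated this is not quite enough, because over a local field $K_\fw$ of odd residue characteristic there are \emph{two} ramified quadratic extensions, and a priori $K_\fw(\sqrt{R})$ and $K_\fw(\sqrt{-\sqrt{-q}R})$ could differ by the unramified quadratic twist, in which case $\chi_R$ would remain nontrivial on $I_\fw \cap G_F$. The clean fix is exactly the CM argument you already give for primes above $\fq$: since $F = K(B^{(R)}_{\fP^2})$ is the fixed field of $\ker(\rho_\fP^{(R)} \bmod 4)$, one has $\rho_\fP^{(R)}(I_\fw \cap G_F) \subset 1 + 4\BZ_2$, while tameness forces $\rho_\fP^{(R)}(I_\fw) \subset \{\pm 1\}$, and $(1+4\BZ_2) \cap \{\pm 1\} = \{1\}$. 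This works uniformly at all primes dividing $\fq R$, so there is no need to treat $\fq$ and $R$ separately.
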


\noindent We omit the detailed proofs, which are similar to those given in \cite{CL} for the abelian variety $B$ (see \cite[\S 2 and Lemma 7.11]{CL} and also see \cite[Lemma 2.1, 2.2]{CC}). 

\medskip

We next recall the definition of the $\fP^\infty$-Selmer group of $B^{(R)}$ over an algebraic extension of $K$. Take any non-zero endomorphism $\pi$ in $\msb$ such
that the ideal factorization of $\pi$ in the ring of integers of $\mst$ is equal to $\fP^r$ for some integer $r \geq 1$. Let $L$ be any algebraic extension of $K$. Then for each integer
$n \geq 1$, the group $\Sel_{\pi^n}(B^{(R)}/L)$ is defined by
$$
\Sel_{\pi^n}(B^{(R)}/L) = \Ker ( H^1(L, B^{(R)}_{\pi^n}) \to \prod_{v} H^1(L_v, B^{(R)})),
$$
where $v$ runs over all finite places of $L$, and $L_v$ is the compositum of the completions at $v$ of all finite extensions of $K$ contained in $L$. Passing to the inductive limit over
all $n \geq 1$, and noting that $B^{(R)}_{\pi^\infty} = B^{(R)}_{\fP^\infty}$ as Galois modules, we then define the Selmer group $\Sel_{\fP^\infty}(B^{(R)}/L)$ to be the inductive limit of the
$\Sel_{\pi^n}(B^{(R)}/L)$, so that:-
\[
\Sel_{\fP^\infty}(B^{(R)}/L) = \Ker( H^1(L, B^{(R)}_{\fP^\infty}) \to \prod_{v} H^1(L_v, B^{(R)})).
\]
Since $B^{(R)}$ has good reduction everywhere over $F$, and  $F_\infty = K(B^{(R)}_{\fP^\infty})$, it is then easily seen that we have (see \cite[Theorem 3.9]{CL} and \cite{C1}):-

\begin{thm}\label{t2} As Galois modules, we have
$$
\Sel_{\fP^\infty}(B^{(R)}/F_\infty) = \Hom(X(F_\infty), B^{(R)}_{\fP^\infty}),  \, \, \, \Sel_{\fP^\infty}(B^{(R)}/J_\infty) = \Hom(X(J_\infty), B^{(R)}_{\fP^\infty}),
$$
\end{thm}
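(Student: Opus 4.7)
The plan is a standard reduction in the Iwasawa theory of abelian varieties with complex multiplication, exploiting the fact that by Theorem~\ref{t1} the field $L$ (meaning either $F_\infty$ or $J_\infty$) contains the full $\fP^\infty$-torsion of $B^{(R)}$, and that $B^{(R)}$ has good reduction everywhere over $F$, hence also over $J = FF'$.

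First, since $B^{(R)}_{\fP^\infty}\subset L$, the absolute Galois group $G_L := \Gal(\ov K/L)$ acts trivially on $B^{(R)}_{\fP^\infty}$, giving
$$
H^1(L, B^{(R)}_{\fP^\infty}) = \Hom_{\mathrm{cts}}(G_L^{\mathrm{ab}}, B^{(R)}_{\fP^\infty}).
$$
It then remains to identify the image of the Selmer group under this isomorphism as the subgroup of homomorphisms that factor through the quotient $X(L) = \Gal(M(L)/L)$ of $G_L^{\mathrm{ab}}$, i.e.\ those that are unramified at every prime of $L$ not lying above $\fp$.

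Next I would analyze the local Selmer condition place by place. The splitting $\msb\otimes \BZ_2 = \msb_\fP \times \msb_{\fP^*}$ of the endomorphism ring at $2$ induces a corresponding decomposition of the $2$-divisible group of $B^{(R)}$ over the completions at primes of $L$ above $2$; in the ordinary CM picture, $B^{(R)}[\fP^\infty]$ is the connected (formal) part at primes above $\fp$ and the étale part at primes above $\fp^*$. Consequently $B^{(R)}_{\fP^\infty}$ is unramified at every place $v$ of $L$ not dividing $\fp$ (for $v\nmid 2$ this is good reduction; at $v\mid\fp^*$ it is the étale statement just mentioned), and the standard Kummer-theoretic arguments of \cite{C1} and \cite[Theorem 3.9]{CL} identify the image of the local Kummer map at such $v$ with the unramified subgroup of $H^1(L_v, B^{(R)}_{\fP^\infty})$. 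At a place $v$ above $\fp$, on the other hand, the tower $L/K$ is deeply ramified in the sense of Coates--Greenberg, since $L\supseteq K(B^{(R)}_{\fP^\infty})$ is an arithmetically profinite, wildly ramified extension of $K_\fp$; the Coates--Greenberg theorem then yields surjectivity of the local Kummer map, so the Selmer condition at $v$ is vacuous.

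Combining these two local analyses, $\Sel_{\fP^\infty}(B^{(R)}/L)$ is identified with the group of continuous homomorphisms $G_L^{\mathrm{ab}} \to B^{(R)}_{\fP^\infty}$ that are unramified at every prime of $L$ not above $\fp$, which by the definitions of $M(L)$ and $X(L)$ is exactly $\Hom(X(L), B^{(R)}_{\fP^\infty})$; the $\Gal(L/K)$-equivariance of this isomorphism is automatic from functoriality. The main step I expect to require care is the applicability of the Coates--Greenberg criterion to the twisted variety $B^{(R)}$ rather than to $B$ itself. However, $B^{(R)}$ becomes isomorphic to $B$ over the quadratic extension $K(\sqrt{R})/K$, which is unramified at $\fp$ by the Twisting Hypothesis ($R\equiv 1\bmod 4\CO_K$), so the formal group of $B^{(R)}$ at $\fp$ agrees with that of $B$, and the deep ramification criterion transfers directly from the untwisted case treated in \cite[\S 3]{CL}.
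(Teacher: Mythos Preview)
Your proposal is correct and follows exactly the route the paper indicates: the paper does not spell out a proof but simply records the two key inputs from Theorem~\ref{t1} (that $B^{(R)}$ has good reduction everywhere over $F$, and that $F_\infty = K(B^{(R)}_{\fP^\infty})$) and refers to \cite[Theorem~3.9]{CL} and \cite{C1} for the standard CM descent argument, which is precisely the argument you have unpacked. Your added remark that the quadratic twist by $K(\sqrt{R})/K$ is unramified at $\fp$ (so the formal group at $\fp$ is unchanged) is the right way to see that the local analysis at $\fp$ carries over verbatim from the untwisted case in \cite{CL}.
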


 \noindent Noting that $B^{(R)}_{\fP^\infty} = \mst_\fP/\msb_\fP = \BQ_2/\BZ_2$ as abelian groups, we immediately obtain the following equivalent form of Theorem \ref{thm:main}.
 
 \begin{thm}\label{t3} As abelian groups, we have (i) $\Sel_{\fP^\infty}(B^{(R)}/F_\infty) = (\BQ_2/\BZ_2)^{s_\infty(R) - 1}$, and (ii) if $q\equiv 7\bmod 16$, $\Sel_{\fP^\infty}(B^{(R)}/J_\infty) = (\BQ_2/\BZ_2)^{2s_\infty(R) - 1}$
 for all $R$ satisfying the Twisting Hypothesis, and where $s_\infty(R)$ is the number of primes of $K_\infty$ dividing $\sqrt{-q}R$.
 \end{thm}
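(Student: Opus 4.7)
The plan is to deduce Theorem \ref{t3} directly by combining Theorem \ref{t2} with Theorem \ref{thm:main}; no new arithmetic input is required, only some bookkeeping on the coefficient module $B^{(R)}_{\fP^\infty}$ and the passage from $\BZ_2$-modules to their Pontryagin duals.

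First I would pin down the abstract abelian group structure of the coefficient module. By the discussion preceding Theorem \ref{t1}, the prime $\fP$ of $\msb$ lies above $\fp$, is unramified in $\msb/\CO_K$, and has residue field $\BF_2$; therefore the completion $\msb_\fP$ is isomorphic to $\BZ_2$ and $\mst_\fP$ to $\BQ_2$. Consequently
$$
B^{(R)}_{\fP^\infty} \;\cong\; \mst_\fP/\msb_\fP \;\cong\; \BQ_2/\BZ_2
$$
as abstract abelian groups, which is the only structure relevant for the final statement.

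Next I would apply Theorem \ref{t2} to rewrite
$$
\Sel_{\fP^\infty}(B^{(R)}/F_\infty) \cong \Hom(X(F_\infty), B^{(R)}_{\fP^\infty}), \qquad \Sel_{\fP^\infty}(B^{(R)}/J_\infty) \cong \Hom(X(J_\infty), B^{(R)}_{\fP^\infty}),
$$
where, since each $X(\cdot)$ is a compact $\BZ_2$-module and $B^{(R)}_{\fP^\infty}$ is a discrete $2$-divisible group, the Hom is taken in the continuous sense. By Theorem \ref{thm:main}(i), $X(F_\infty)$ is free of rank $s_\infty(R)-1$ over $\BZ_2$; and by Theorem \ref{thm:main}(ii), under the standing assumption $q\equiv 7\bmod 16$ needed for the $J_\infty$ assertion, $X(J_\infty)$ is free of rank $2s_\infty(R)-1$ over $\BZ_2$.

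Finally, for any free $\BZ_2$-module of finite rank, continuous Pontryagin duality gives
$$
\Hom_{\cont}(\BZ_2^n,\BQ_2/\BZ_2) \cong (\BQ_2/\BZ_2)^n,
$$
and combining this with the two previous steps yields both asserted isomorphisms. There is essentially no obstacle: the only point requiring a moment's care is confirming that the Hom in Theorem \ref{t2} agrees with the continuous Hom needed for Pontryagin duality, which is automatic since $X(F_\infty)$ and $X(J_\infty)$ are torsion-free compact pro-$2$ groups in the situation we are in.
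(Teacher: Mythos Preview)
Your proposal is correct and follows exactly the paper's approach: the paper derives Theorem \ref{t3} as an immediate corollary by combining Theorem \ref{t2} with Theorem \ref{thm:main}, after noting that $B^{(R)}_{\fP^\infty} = \mst_\fP/\msb_\fP = \BQ_2/\BZ_2$ as abelian groups. Your careful remark that the $J_\infty$ assertion implicitly carries the hypothesis $q\equiv 7\bmod 16$ from Theorem \ref{thm:main}(ii) is accurate and worth making explicit.
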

 
 \noindent  Let $\fF$ denote either of the fields $F$ or $J$. We recall that the Tate-Shafarevich group $\Sha(B^{(R)}/\fF_\infty)$ of $B^{(R)}/\fF_\infty$ is defined by
 $$
\Sha(B^{(R)}/\fF_\infty) =  \Ker(H^1(\fF_\infty, B^{(R)}) \to \prod_v H^1(\fF_{\infty, v}, B^{(R)})).
 $$
 It is a $\msb$-module and we write $\Sha(B^{(R)}/\fF_\infty)(\fP^\infty)$ for its $\fP$-primary subgroup.
 Note also that the group $B^{(R)}(\fF_\infty)$ of $\fF_\infty$-rational points of $B^{(R)}$ has a natural structure as an $\msb$-module, and we have the exact sequence
 \begin{equation}\label{4.1}
 0 \to B^{(R)}(\fF_\infty)\otimes_\msb(\mst_\fP/\msb_\fP) \to \Sel_{\fP^\infty}(B^{(R)}/\fF_\infty) \to \Sha(B^{(R)}/\fF_\infty)(\fP^\infty) \to 0.
 \end{equation}
Thus Theorem \ref{t3} can be reformulated as follows. Define $g_{\fF_\infty}(R)$ to be the $\mst$-dimension of $B^{(R)}(\fF_\infty)\otimes_{\BZ}\BQ$. Now $\Sha(B^{(R)}/\fF_\infty)(\fP^\infty)$ is a divisible group thanks to Theorem \ref{t2}, the fact that $X(\fF_\infty)$ is a free finitely generated $\BZ_2$-module, and the exact sequence \ref{4.1}, whence we define $e_{\fF_\infty}(R)$ to be its $\BZ_2$-corank. 

\begin{thm}\label{t4}  For all $R$ satisfying the Twisting Hypothesis, we have (i) $g_{F_\infty}(R)+ e_{F_\infty}(R)= s_\infty(R)-1$, and (ii) if $q\equiv 7\bmod 16$, $g_{J_\infty}(R)+ e_{J_\infty}(R) = 2s_\infty(R)-1$, where we recall that $g_{\fF_\infty}(R)$ is the $\mst$-dimension of $B^{(R)}(\fF_\infty)\otimes_{\BZ}\BQ$, $e_{\fF_\infty}(R)$ is the $\BZ_2$-corank of $\Sha(B^{(R)}/\fF_\infty)(\fP^\infty)$, and, as always, $s_\infty(R)$ is the number of primes of $K_\infty$ dividing $\sqrt{-q}R$.
\end{thm}

\noindent Since $s_\infty(R)$ is very easy to calculate explicitly, it seems to us that no simple explicit general result like Theorems \ref{t3} and \ref{t4} was known previously in the Iwasawa theory of elliptic curves. As an illustration, at the end of the next section we apply these theorems to one numerical example taken from \cite{CC}, with $s_\infty(R) = 4, g_{F_\infty}(R) = 2, e_{F_\infty}(R)=1.$

\section{Applications of Theorem \ref{thm:main}}  

In this last section, we discuss the situation when $R$ is a square free rational integer satisfying the Twisting Hypothesis, and having the property that $s_\infty(R) = 2$ (the case when $s_\infty(R) = 1$ is already discussed fully in \cite{CLL}). We also discuss one numerical example with $s_\infty(R) = 4$. The following lemma is clear from the remarks made immediately after the statement of Theorem \ref{thm:main}.

\begin{lem}\label{n1} Assume  that $R$ is a square free rational integer satisfying the Twisting Hypothesis. Then $s_\infty(R) = 2$ if and only if one of the following three cases hold:-
(i) we have $q \equiv 15 \mod 32$ and $R = 1$, (ii) we have $q \equiv 7 \mod 16$ and $R = r$, where $r$ is a prime with $r \equiv 5 \mod 8$, which is inert in $K$, or (iii) we have $q \equiv 7 \mod 16$ and $R = -r$, where $r$ is a prime with $r \equiv 3 \mod 8$, which is inert in $K$.
\end{lem}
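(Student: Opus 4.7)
The plan is to enumerate the primes of $K_\infty$ counted by $s_\infty(R)$, separating those above $\fq$ from those above the rational primes dividing $R$, and then apply the explicit formula recalled immediately after Theorem~\ref{thm:main}.

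First I would reduce the Twisting Hypothesis in the case $R\in\BZ$. Since $\CO_K=\BZ[(1+\sqrt{-q})/2]$ has $4\CO_K\cap\BZ=4\BZ$, the condition $R\equiv 1\bmod 4\CO_K$ is equivalent to $R\equiv 1\bmod 4$ in $\BZ$, and in particular forces $R$ to be odd. The Twisting Hypothesis then reduces to: $R$ is a square-free odd integer with $R\equiv 1\bmod 4$ and $q\nmid R$; the odd-valuation condition is automatic from squarefreeness, since no rational prime $\ell\ne q$ ramifies in $K$. Next I would count the primes of $K_\infty$ above $\fq$. Writing $\iota_\fp(\sqrt{-q})=1+2^a u$ with $u\in\BZ_2^\times$ and squaring, one finds $\ord_2(q+1)=a+1$; hence $a=2$ iff $q\equiv 7\bmod 16$, $a=3$ iff $q\equiv 15\bmod 32$, and $a\geq 4$ iff $q\equiv 31\bmod 64$. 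Applying the formula in the introduction to $w=\sqrt{-q}$ gives $2^{a-2}$ primes above $\fq$, namely $1$, $2$, or $\geq 4$ in these three regimes.

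I would then analyse the contribution of each rational prime $\ell\mid R$. If $\ell$ splits as $\fl\fl^*$ in $K$, each of $\fl$ and $\fl^*$ contributes at least one prime of $K_\infty$, so $\ell$ accounts for at least $2$ primes. If $\ell$ is inert, then $\ell\CO_K$ is already principal, so one may take $w=\pm\ell^h$; since $h$ is odd we have $\ell^h\equiv\ell\bmod 8$. A direct check of the four residues of $\ell$ modulo $8$ shows: when $\ell\equiv 1$ or $7\bmod 8$ the sign-adjusted generator $w$ lies in $1+8\BZ_2$, contributing $\geq 2$ primes, whereas when $\ell\equiv 3$ or $5\bmod 8$ one gets $w\equiv 5\bmod 8$, contributing exactly one prime.

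Combining, $s_\infty(R)=2$ forces one of two scenarios. Either $\fq$ contributes $2$ and $R$ contributes $0$, forcing $R=1$ and $q\equiv 15\bmod 32$, which is case (i); or $\fq$ contributes $1$ and $R$ contributes exactly $1$, forcing $q\equiv 7\bmod 16$ and $R=\pm\ell$ for a single inert prime $\ell\equiv\pm 3\bmod 8$. The constraint $R\equiv 1\bmod 4$ then picks out $R=\ell$ when $\ell\equiv 5\bmod 8$ (case (ii)) and $R=-\ell$ when $\ell\equiv 3\bmod 8$ (case (iii)). The only mild bookkeeping subtlety is tracking the sign adjustment of $w$ jointly with the residue of $\ell$ modulo $8$, but once the formula from the introduction is in hand the argument is entirely mechanical.
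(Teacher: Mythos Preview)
Your proposal is correct and follows exactly the route the paper intends: the paper does not give a proof beyond stating that the lemma ``is clear from the remarks made immediately after the statement of Theorem~\ref{thm:main}'', i.e.\ from the explicit formula $2^{\ord_2(\iota_\fp(w)-1)-2}$ for the number of primes of $K_\infty$ above a given $\fw$. Your argument simply unpacks that formula case by case --- first for $\fq$ via $\ord_2(q+1)$, then for each rational prime $\ell\mid R$ according to its splitting behaviour in $K$ and its residue modulo $8$ --- and the bookkeeping (including the sign choice for $w$ and the congruence $R\equiv 1\bmod 4$ selecting between $R=\ell$ and $R=-\ell$) is handled correctly.
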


By Theorem \ref{t3}, we know that, whenever $s_\infty(R) = 2$, we have $\Sel_{\fP^\infty}(B^{(R)}/F_\infty) = \BQ_2/\BZ_2$ as an abelian group. For each $n \geq 0$, recall that $K_n$ is the $n$-th layer of the $\BZ_2$-extension $K_\infty/K$. The cases $(i), (ii), (iii)$ of the next theorem refer to the three cases of Lemma \ref{n1}.

\begin{thm}\label{n2} In case (i), we have
$\Sel_{\fP^\infty}(B/K)$ is finite, $\Sel_{\fP^\infty}(B/K_n)$ has $\BZ_2$-corank at most $1$ for all $n \geq 1$, and  $\Sel_{\fP^\infty}(B/K_n)$ has $\BZ_2$-corank  equal to $1$ for some $n \geq 1$ if and only if $L(B/K_n, 1) = 0$. In case (ii), we have  $\Sel_{\fP^\infty}(B^{(R)}/K)$ is finite, $\Sel_{\fP^\infty}(B^{(R)}/K_n)$ has $\BZ_2$-corank at most 1 for all $n \geq 1$, and  $\Sel_{\fP^\infty}(B^{(R)}/K_n)$ has $\BZ_2$-corank equal to 1 for some $n \geq 1$ if and only if $L(B^{(R)}/K_n, 1) = 0$. In case (iii),  we have  $\Sel_{\fP^\infty}(B^{(R)}/K_n)$ has $\BZ_2$-corank equal to $1$ and $L(B^{(R)}/K_n, 1) = 0$ for all $n \geq 0$.
\end{thm}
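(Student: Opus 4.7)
The strategy is to descend Theorem \ref{t3} from $F_\infty$ down to $K_\infty$ and then to each $K_n$, combine this with the Iwasawa main conjecture for CM abelian varieties, and finally distinguish the three cases using a root-number calculation. For the first step, $\Sel_{\fP^\infty}(B^{(R)}/F_\infty) \cong \BQ_2/\BZ_2$ by Theorem \ref{t3}. Since $F_\infty = K(B^{(R)}_{\fP^\infty})$, the group $\Delta = \Gal(F_\infty/K_\infty) \cong \BZ/2\BZ$ acts on $B^{(R)}_{\fP^\infty}$ by $-1$; the induced diagonal $\Delta$-action on $\Sel_{\fP^\infty}(B^{(R)}/F_\infty) = \Hom(X(F_\infty), B^{(R)}_{\fP^\infty})$ is thus by $\pm 1$, and in either case its $\Delta$-fixed part has $\BZ_2$-corank at most $1$. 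An inflation--restriction argument together with Mazur's control theorem for the $\BZ_2$-extension $K_\infty/K$ then yields $\corank_{\BZ_2}\Sel_{\fP^\infty}(B^{(R)}/K_n) \leq 1$ for every $n \geq 0$, and the Pontryagin dual $Y := \Sel_{\fP^\infty}(B^{(R)}/K_\infty)^\vee$ is a finitely generated torsion $\Lambda$-module (with $\Lambda = \BZ_2[[\Gal(K_\infty/K)]]$) satisfying $\mu(Y) = 0$ and $\lambda(Y) \leq 1$.

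To convert the corank bound into the desired iff, I would invoke Rubin's proof of the Iwasawa main conjecture for the imaginary quadratic field $K$, transferred to the CM abelian variety $B^{(R)}$ via the restriction-of-scalars framework developed by Gross, Yager, and de Shalit. The characteristic ideal of $Y$ is then generated, up to a unit in $\Lambda$, by a $\fP$-adic $L$-function $L_\fP \in \Lambda$ whose interpolation at finite-order characters $\chi$ of $\Gal(K_\infty/K)$ recovers the classical $L$-values $L(B^{(R)}, \chi, 1)$, up to nonzero periods and elementary $2$-adic factors. Combined with Mazur's control theorem, this gives: for each $n \geq 1$, $\corank_{\BZ_2}\Sel_{\fP^\infty}(B^{(R)}/K_n) = 1$ if and only if $L(B^{(R)}/K_n, 1) = 0$.

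Finally, a root-number calculation distinguishes the three cases. In case (iii), I would show via local Rohrlich-type formulae for CM abelian varieties that the global root number $w(B^{(R)}/K_n) = -1$ for every $n \geq 0$: the inert rational prime $r$ (with $R = -r$, $r \equiv 3 \pmod 8$) contributes the decisive $-1$ at each level, and no other prime flips the sign along the tower $K_\infty/K$. Hence $L(B^{(R)}/K_n, s)$ vanishes to odd order at $s = 1$, and by the previous paragraph $\Sel_{\fP^\infty}(B^{(R)}/K_n)$ has corank exactly $1$ at every layer. In cases (i) and (ii), $w(B^{(R)}/K) = +1$ and $L(B^{(R)}/K, 1) \neq 0$ by a Gross-type explicit nonvanishing (arguing as in the analogous $s_\infty(R) = 1$ treatment of \cite{CLL}), which yields the finiteness of $\Sel_{\fP^\infty}(B^{(R)}/K)$; the iff for $n \geq 1$ is already contained in the previous paragraph.

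The main technical obstacle will be the root-number computation in the last step at each of the delicate primes (the prime $\fp$ of $K$ above $2$, the inert prime $\fp^*$ when $q \equiv 7 \pmod{16}$, and the inert rational prime $r$ in cases (ii) and (iii)), together with verifying that these signs are preserved under the finite $2$-power base changes $K_n/K$. The earlier steps amount to a careful assembly of existing machinery --- inflation--restriction, Mazur's control theorem, and Rubin's main conjecture for $K$ transferred to $B^{(R)}$ --- which at $p = 2$ and for a higher-dimensional CM abelian variety obtained by restriction of scalars requires careful attention to periods and local factors but raises no essential conceptual difficulty.
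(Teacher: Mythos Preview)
Your overall architecture---descend Theorem~\ref{t3} to the layers $K_n$, invoke the main conjecture to equate finiteness of the Selmer group with nonvanishing of the $L$-value, and then separate the three cases by a sign/nonvanishing input at $n=0$---matches the paper's. The routing, however, differs. You propose to pass from $F_\infty$ to $K_\infty$ via the $\Delta$-action and then to $K_n$ by Mazur's control theorem for $K_\infty/K$, and to apply the main conjecture directly over $K_\infty$ with a $\fp$-adic $L$-function interpolating $L(B^{(R)},\chi,1)$ for characters of $\Gal(K_\infty/K)$. The paper instead stays over $F_\infty$: Lemma~\ref{n4} identifies the $\BZ_2$-corank of $\Sel_{\fP^\infty}(B^{(R)}/K_n)$ with that of $(\Sel_{\fP^\infty}(B^{(R)}/F_\infty))^{\Gamma_n}$, Lemma~\ref{n3} translates this into a condition on the characteristic power series $c_{F_\infty}(T)$, Theorem~\ref{n6} (the main conjecture for $F_\infty/F$) relates this to $L(B^{(R)}/F_n,1)$, and then a separate nonvanishing theorem (Theorem~\ref{n12}, which is Theorem~1.5 of \cite{CL}) shows $L(B^{(R)}/K_n,\chi_n,1)\neq 0$, so that $L(B^{(R)}/F_n,1)$ and $L(B^{(R)}/K_n,1)$ vanish together. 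Your route has the advantage of not needing this auxiliary nonvanishing input; the paper's route has the advantage that the main conjecture and control theorem are formulated over $F$, where $B^{(R)}$ has good reduction everywhere, which is the natural setting for the Euler-system proof at $p=2$.

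Two specific points. First, Rubin's main conjecture is for odd $p$; at $p=2$ the relevant input is the argument of \S7 of \cite{CL} (Theorem~7.13 there), which is exactly what the paper invokes for Theorem~\ref{n6}. You note that $p=2$ ``requires careful attention'', but you should cite this rather than Rubin. Second, your treatment of case~(iii) is more laborious than necessary: you propose to compute the local root numbers of $B^{(R)}/K_n$ at every level and every delicate prime. The paper instead observes that in case~(iii) the twist $B^{(R)}$ is actually defined over $\BQ$, applies Gross's explicit functional equation (Theorem~19.1.1 of \cite{Gross1}) to get $w(B^{(R)}/\BQ)=-1$ and hence $L(B^{(R)}/K,1)=0$, and then notes that $L(B^{(R)}/K_n,1)=0$ for all $n$ follows immediately since $L(B^{(R)}/K,s)$ divides $L(B^{(R)}/K_n,s)$. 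No root-number computation along the tower is needed. For cases~(i) and~(ii) the nonvanishing of $L(B^{(R)}/K,1)$ comes from Rohrlich~\cite{R} and Theorem~1.3 of \cite{CL}, respectively, rather than from \cite{CLL}.
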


We now outline the proof of this theorem, again referring to \cite{CL} for similar detailed arguments. We begin by recalling an elementary purely algebraic lemma, which holds for any $R$ in $\CO_K$ satisfying the Twisting Hypothesis. Put $\Gamma = \Gal(F_\infty/F)$, $\Gamma_n = \Gal(F_\infty/F_n)$, and fix any topological generator $\gamma$ of $\Gamma$. Let $\Lambda(\Gamma)$ be the Iwasawa algebra of $\Gamma$ with coefficients in $\BZ_2$. As usual, we identify $\Lambda(\Gamma)$ with the formal power series ring $\BZ_2[[T]]$ by mapping $\gamma$ to $1+T$. Since $X(F_\infty)$ is a finitely generated $\BZ_2$-module, it is clearly a torsion  $\Lambda(\Gamma)$-module, and we write $c_{F_\infty}(T)$ for its characteristic power series. By Theorem~\ref{t1}, we have a character $\kappa: \Gamma \to 1 + \fP^2 \msb_\fP = 1+4\BZ_2$ giving the action of $\Gamma$ on $B^{(R)}_{\fP^\infty}$ and we define $u=\kappa(\gamma)$.
%Recalling that $ \Sel_{\fP^\infty}(B^{(R)}/F_\infty) = \Hom(X(F_\infty), B^{(R)}_{\fP^\infty})$, we obtain:-

\begin{lem}\label{n3} For each $R$ in $\CO_K$ satisfying the Twisting Hypothesis, $(\Sel_{\fP^\infty}(B^{(R)}/F_\infty) )^{\Gamma_n}$ is infinite for some integer $n \geq 0$ if and only if $c_{F_\infty}(u\zeta-1) = 0$ for some $2^n$-th power root of unity $\zeta$.
\end{lem}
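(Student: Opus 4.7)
The plan is to translate the Selmer condition into a statement about characteristic power series by combining Theorem \ref{t2} with a direct computation of the $\Gamma_n$-action on a Hom-space. Let $\mathcal{S} = \Sel_{\fP^\infty}(B^{(R)}/F_\infty)$ and $X = X(F_\infty)$. By Theorem \ref{t2}, $\mathcal{S} = \Hom_{\BZ_2}(X, B^{(R)}_{\fP^\infty})$, where $\Gamma$ acts on $B^{(R)}_{\fP^\infty}\cong \BQ_2/\BZ_2$ via the character $\kappa$. The Galois action on $\mathcal{S}$ is $(\gamma\phi)(x) = \kappa(\gamma)\,\phi(\gamma^{-1}x)$.

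I would first unwind this action to describe the invariants. The condition $\gamma^{2^n}\phi=\phi$ reads $u^{2^n}\phi(\gamma^{-2^n}x)=\phi(x)$ for every $x\in X$, which by $\BZ_2$-linearity of $\phi$ is equivalent to $\phi$ vanishing on the submodule $((1+T)^{2^n}-u^{2^n})X = f_n(T)X$, where $f_n(T)=(1+T)^{2^n}-u^{2^n}\in \Lambda(\Gamma)$. Therefore
\[
\mathcal{S}^{\Gamma_n} \;=\; \Hom_{\BZ_2}\bigl(X/f_n(T)X,\ \BQ_2/\BZ_2\bigr),
\]
and in particular $\mathcal{S}^{\Gamma_n}$ is infinite if and only if the finitely generated $\BZ_2$-module $X/f_n(T)X$ is infinite.

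Next I would use that by Theorem \ref{thm:main}(i), $X$ is a free $\BZ_2$-module of finite rank $\lambda = s_\infty(R)-1$, so $f_n(T)$ acts as a $\BZ_2$-linear endomorphism on $X$, and $X/f_n(T)X$ is finite precisely when this endomorphism becomes invertible after tensoring with $\BQ_2$. Diagonalizing the action of $T$ on $X\otimes_{\BZ_2}\overline{\BQ}_2$ with eigenvalues $\alpha_1,\dots,\alpha_\lambda$ (the roots of $c_{F_\infty}(T)$), $f_n(T)$ acts with eigenvalues $f_n(\alpha_i)$; hence $X/f_n(T)X$ is infinite if and only if $f_n$ and $c_{F_\infty}$ share a root in $\overline{\BQ}_2$.

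Finally, one reads off the roots of $f_n$: they are exactly the elements $T=u\zeta-1$ with $\zeta$ a $2^n$-th root of unity in $\overline{\BQ}_2$. Combining the previous equivalences gives that $\mathcal{S}^{\Gamma_n}$ is infinite if and only if $c_{F_\infty}(u\zeta-1)=0$ for some such $\zeta$, which is the claim. The only subtle step is the action computation in the first paragraph, where the twist by $\kappa$ must be carried through correctly so that the Iwasawa-theoretic element that kills the invariants is $(1+T)^{2^n}-u^{2^n}$ rather than the untwisted $(1+T)^{2^n}-1$; this twist is precisely what forces the roots to involve the character value $u$.
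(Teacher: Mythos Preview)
Your proof is correct and follows essentially the same route as the paper: both reduce, via Theorem \ref{t2}, to showing that $X/((1+T)^{2^n}-u^{2^n})X$ is infinite if and only if $c_{F_\infty}$ and $(1+T)^{2^n}-u^{2^n}=\prod_\zeta (T-(u\zeta-1))$ share a root. The paper phrases this last step as the standard coprimality criterion for characteristic power series of torsion $\Lambda(\Gamma)$-modules, whereas you invoke the freeness of $X$ from Theorem \ref{thm:main}(i) and argue via eigenvalues on $X\otimes_{\BZ_2}\overline{\BQ}_2$; these are the same argument, with the minor caveat that ``diagonalizing'' should really be ``triangularizing'' (Jordan blocks may occur), though the conclusion about invertibility of $f_n(T)$ depends only on the eigenvalues and is unaffected.
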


\begin{proof}
Put $\gamma_n = \gamma^{2^n}$, so that $\gamma_n$ is a topological generator of $\Gamma_n$, $\gamma_n = (1+T)^{2^n}$ and $\kappa(\gamma_n) = u^{2^n}$. Write $X=X(F_\infty)$ in this proof for simplicity. By Theorem~\ref{t2},  we have 
\begin{equation}\label{eq:char_pol1}
(\Sel_{\fP^\infty}(B^{(R)}/F_\infty))^{\Gamma_n} = \Hom(X/(\gamma_n-\kappa(\gamma_n))X, B^{(R)}_{\fP^\infty}).
\end{equation}
Thus, the left hand side is finite if and only if $X/(\gamma_n-\kappa(\gamma_n))X$ is finite. Note that 
\begin{equation}\label{eq:char_pol2}
\gamma_n-\kappa(\gamma_n) = (1+T)^{2^n}-u^{2^n} = \prod_{\zeta}(T-(u\zeta -1)),
\end{equation}
where $\zeta$ runs over all the $2^n$-th power of roots unity. Since $c_{F_\infty}(T)$ is the characteristic power series of $X$, it follows that  $X/(\gamma_n-\kappa(\gamma_n))X$ is finite if and only if $c_{F_\infty}(T)$ is coprime to $T-(u\zeta -1)$ for each $\zeta$. This proves Lemma~\ref{n3}.
\end{proof}

\noindent In addition, we have the following arithmetic lemma.  For each algebraic extension $L/K$, we define
$$
\Sel'_{\fP^\infty}(B^{(R)}/L) = \Ker ( H^1(L, B^{(R)}_{\fP^\infty}) \to \prod_{v\nmid \fp} H^1(L_v, B^{(R)})).
$$
Recall that $F_n = FK_n$ for $n \geq 0$.

\begin{lem}\label{n4} For each $R$ in $\CO_K$ satisfying the Twisting Hypothesis, and for all $n \geq 0$, $\Sel_{\fP^\infty}(B^{(R)}/K_n)$ has the same $\BZ_2$-corank as $\Sel'_{\fP^\infty}(B^{(R)}/F_n) = (\Sel_{\fP^\infty}(B^{(R)}/F_\infty) )^{\Gamma_n}$. 
\end{lem}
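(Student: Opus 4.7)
The lemma has two parts: the identity $\Sel'_{\fP^\infty}(B^{(R)}/F_n) = (\Sel_{\fP^\infty}(B^{(R)}/F_\infty))^{\Gamma_n}$, and the equality of $\BZ_2$-corank with $\Sel_{\fP^\infty}(B^{(R)}/K_n)$. My plan is to combine two inflation--restriction sequences---one for $\Gamma_n = \Gal(F_\infty/F_n)$ and one for $\Delta_n := \Gal(F_n/K_n)$---with a careful matching of local Kummer conditions at primes above $\fp$, the decisive fact being that $F_\infty = K(B^{(R)}_{\fP^\infty})$ by Theorem~\ref{t1}.

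For the exact identity, I would apply inflation--restriction to $\Gamma_n$ acting on $B^{(R)}_{\fP^\infty}$ via $\kappa$. Since $\Gamma_n \cong \BZ_2$ is pro-cyclic, $H^2(\Gamma_n, B^{(R)}_{\fP^\infty}) = 0$; and $H^1(\Gamma_n, B^{(R)}_{\fP^\infty})$ is the cokernel of multiplication by $\kappa(\gamma_n) - 1 = u^{2^n} - 1$ on $\BQ_2/\BZ_2$, which vanishes because $u^{2^n} - 1$ is a nonzero element of $\BZ_2$ and $\BQ_2/\BZ_2$ is divisible. Hence restriction induces an isomorphism $H^1(F_n, B^{(R)}_{\fP^\infty}) \xrightarrow{\sim} H^1(F_\infty, B^{(R)}_{\fP^\infty})^{\Gamma_n}$. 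It then remains to match local conditions: at primes $v\nmid \fp$ of $F_n$, the Kummer condition agrees with the unramified condition (using the everywhere good reduction from Theorem~\ref{t1}) and behaves functorially under pullback along $F_\infty/F_n$. The crux is at primes $w$ of $F_\infty$ above $\fp$, where I would show that the Kummer local condition for $\Sel_{\fP^\infty}(B^{(R)}/F_\infty)$ is \emph{vacuous}, i.e.~equals the whole $H^1(F_{\infty,w}, B^{(R)}_{\fP^\infty})$. This holds because $B^{(R)}$ has good ordinary reduction at $\fp$ and the formal group $\widehat{B^{(R)}}$ absorbs the entire $\fP^\infty$-torsion over $F_{\infty,w}$ (since $F_\infty = K(B^{(R)}_{\fP^\infty})$), forcing the local Kummer map to be surjective. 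Consequently the condition at $w$ imposes no restriction, exactly matching the ``no condition at $\fp$'' in the definition of $\Sel'$ over $F_n$.

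For the corank equality, I would apply inflation--restriction to $\Delta_n \cong \Gal(F/K)$ of order $2$ acting on $B^{(R)}_{\fP^\infty}$. The group $B^{(R)}_{\fP^\infty}(F_n)$ is the finite cyclic subgroup cut out by $u^{2^n}-1$, so $H^i(\Delta_n, B^{(R)}_{\fP^\infty}(F_n))$ is finite for $i \geq 1$, whence the restriction map $\Sel_{\fP^\infty}(B^{(R)}/K_n) \to \Sel_{\fP^\infty}(B^{(R)}/F_n)^{\Delta_n}$ has finite kernel and finite cokernel. It then suffices to compare $\Sel_{\fP^\infty}(B^{(R)}/F_n)$ with $\Sel'_{\fP^\infty}(B^{(R)}/F_n)$ in $\BZ_2$-corank and to handle the passage to $\Delta_n$-invariants. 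The quotient $\Sel'/\Sel$ over $F_n$ embeds in $\bigoplus_{v \mid \fp} H^1(F_{n,v}, B^{(R)})[\fP^\infty]$, and by local Tate duality for the abelian variety $B^{(R)}$ at a place of good ordinary reduction these local $\fP^\infty$-torsion terms are finite. Combining this with the finite kernel/cokernel for $\Delta_n$---and using that $\Delta_n$ acts on $B^{(R)}_{\fP^\infty}$ by $-1$ (since $\kappa$ embeds $\Delta_n$ into $\BZ_2^\times/(1+4\BZ_2) = \{\pm 1\}$)---one obtains the desired equality $\rk_{\BZ_2} \Sel_{\fP^\infty}(B^{(R)}/K_n) = \rk_{\BZ_2} \Sel'_{\fP^\infty}(B^{(R)}/F_n)$.

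The principal technical obstacle is in this second step: controlling the local Kummer cokernels at primes above $\fp$ of $F_n$ and their interaction with $\Delta_n$-cohomology. This is precisely the careful bookkeeping that the authors defer to the analogous arguments in \cite{CL}; once one exploits the ordinary CM structure of $B^{(R)}$ at $\fp$ together with $F_\infty = K(B^{(R)}_{\fP^\infty})$, these local contributions are forced to be finite (hence of corank zero), and the required corank equality follows.
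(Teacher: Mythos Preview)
Your outline follows the paper's route closely: both halves rest on inflation--restriction (first for $\Gamma_n$, then for $\Delta$), on the vacuity of the local condition at $\fp$ over $F_\infty$ (so $\Sel_{\fP^\infty}=\Sel'_{\fP^\infty}$ there), and on a Tate-local-duality argument at $\fp$ that both you and the paper defer to \cite{CL}. Your control step for $\Gamma_n$ is in fact more explicit than the paper's, which simply cites \cite{CL} for \eqref{n9} and \eqref{n10}.

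There is, however, one genuine gap in your second step. You pass through $\Sel_{\fP^\infty}(B^{(R)}/F_n)^{\Delta_n}$ and then invoke ``$\Delta_n$ acts on $B^{(R)}_{\fP^\infty}$ by $-1$'' to dispose of the $\Delta_n$-invariants. But the action on the coefficient module alone does not show that taking $\Delta_n$-invariants preserves the $\BZ_2$-corank; for that you need $\Delta$ to act \emph{trivially} on (the divisible part of) the Selmer group. The paper secures this directly: it asserts that $\Delta$ acts trivially on $\Sel'_{\fP^\infty}(B^{(R)}/F_\infty)$, hence on $\Sel'_{\fP^\infty}(B^{(R)}/F_n)=(\Sel'_{\fP^\infty}(B^{(R)}/F_\infty))^{\Gamma_n}$, so the $\Delta$-invariants cost nothing. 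The underlying reason, via Theorem~\ref{t2}, is that $\Delta$ acts by $-1$ on \emph{both} factors of $\Hom(X(F_\infty),B^{(R)}_{\fP^\infty})$: on $B^{(R)}_{\fP^\infty}$ (your observation) and on $X(F_\infty)$ (because $X(K_\infty)=0$ forces $N_\Delta X(F_\infty)=0$ on the torsion-free module $X(F_\infty)$; cf.\ Lemma~\ref{lem:naka1}). You have supplied only half of this cancellation, so as written your chain of corank equalities does not close. (A small aside: $\kappa$ is defined on $\Gamma=\Gal(F_\infty/F)$, not on $\Delta_n$, so your parenthetical justification of the $-1$ action is misattributed, though the conclusion is correct.)
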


\begin{proof} Since $B^{(R)}$ has good reduction everywhere over $F$, entirely similar arguments to those given in \S3 of \cite{CL} show that
\begin{equation}\label{n10}
\Sel_{\fP^\infty}(B^{(R)}/F_\infty) = \Sel'_{\fP^\infty}(B^{(R)}/F_\infty),
\end{equation}
and that $\Delta$ acts trivially on  $\Sel'_{\fP^\infty}(B^{(R)}/F_\infty)$, where $\Delta = \Gal(F_\infty/K_\infty)$.  Using
again the fact that $B^{(R)}$ has good reduction everywhere over $F$, the same argument as in the proof of Proposition 3.11 of \cite{CL} shows that
\begin{equation}\label{n9}
(\Sel'_{\fP^\infty}(B^{(R)}/F_\infty))^{\Gamma_n} = \Sel'_{\fP^\infty}(B^{(R)}/F_n)
\end{equation}
for all $n \geq 0$. Thus to complete the proof, it suffices to show that the natural map
\begin{equation}\label{n5}
 \Sel_{\fP^\infty}(B^{(R)}/K_n) \to (\Sel'_{\fP^\infty}(B^{(R)}/F_n))^\Delta = \Sel'_{\fP^\infty}(B^{(R)}/F_n)  
\end{equation}
has finite kernel and cokernel for all $n \geq 0$. But this follows easily from the fact that $\Delta$ is of order 2, combined with a similar argument using Tate local duality for the abelian variety $B^{(R)}/K$, which has good reduction at the prime $\fp$, as that given in the proof of Theorem 3.12 of \cite{CL}.
\end{proof}

For each $n \geq 0$, let $L(B^{(R)}/F_n ,s)$ (resp. $L(B^{(R)}/K_n ,s)$) be the complex $L$-series of $B^{(R)}/F_n$ (resp.  $B^{(R)}/K_n$). Let $\chi_n$ be the non-trival
character of $\Gal(F_n/K_n)$, and write $L(B^{(R)}/K_n, \chi_n, s)$ for the twist of $L(B^{(R)}/K_n ,s)$ by $\chi_n$. Then we have
\begin{equation}\label{n7}
L(B^{(R)}/F_n,s) = L(B^{(R)}/K_n, s)L(B^{(R)}/K_n, \chi_n, s).
\end{equation}

\begin{thm}\label{n12} For all $R$ in $\CO_K$ satisfying the Twisting Hypothesis, and all $n \geq 0$, we have $L(B^{(R)}/K_n, \chi_n, 1) \neq 0$, and so
$L(B^{(R)}/F_n,1) \neq 0$ if and only if  $L(B^{(R)}/K_n, 1) \neq 0$.
\end{thm}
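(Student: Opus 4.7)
The plan is to reduce Theorem~\ref{n12} to an $R$-independent non-vanishing claim and then prove that claim by combining Theorem~\ref{thm:main} applied to $R=1$ with the Iwasawa Main Conjecture for CM elliptic curves.

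\medskip

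Since $\chi_n$ is the quadratic character of $F_n/K_n$ with $F_n = K_n(\sqrt{-\sqrt{-q}R})$, the twist $(B^{(R)})^{\chi_n}$ over $K_n$ equals $B^{(R\cdot(-\sqrt{-q}R))} = B^{(-\sqrt{-q}R^2)}$, which coincides with $B^{(-\sqrt{-q})}$ because $R^2\in(K^\times)^2$. Hence
\[
L(B^{(R)}/K_n,\chi_n,s) \;=\; L(B^{(-\sqrt{-q})}/K_n,s),
\]
and the theorem reduces to the $R$-free statement that $L(B^{(-\sqrt{-q})}/K_n,1) \ne 0$ for every $n\ge 0$. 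Next, put $L := K(\sqrt{-\sqrt{-q}})$, which is precisely the field $F$ of \eqref{b} when $R=1$, and observe that over $L_n := LK_n$ the twist becomes trivial, yielding the factorisation
\[
L(B/L_n,s) \;=\; L(B/K_n,s)\cdot L(B^{(-\sqrt{-q})}/K_n,s).
\]
It therefore suffices to prove $L(B/L_n,1) \ne 0$ for every $n\ge 0$, since this forces both factors on the right to be non-zero.

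\medskip

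When $q \equiv 7 \pmod{16}$, applying Theorem~\ref{thm:main} with $R=1$ gives $s_\infty(1) = 1$ and hence $X(L_\infty) = 0$; the $\fP$-Selmer group of $B$ over $L_n$ is therefore trivial at every layer, and the Iwasawa Main Conjecture for the Gross curve (due to Rubin in this CM setting) forces $L(B/L_n,1) \ne 0$, completing the proof. When $q \equiv 15 \pmod{16}$, Theorem~\ref{thm:main} only gives $s_\infty(1) \ge 2$ and $X(L_\infty)$ of positive $\BZ_2$-rank, so this direct route fails. In that case one supplements the algebraic side with a root-number analysis of the Grossencharakter $\psi\otimes\eta$ of $L$ — where $\psi$ is the Grossencharakter attached to the Gross elliptic curve $A/H$ and $\eta$ corresponds to the quadratic extension $L/K$ — together with a $\fp$-adic analysis of the associated Katz $L$-function, to show that any vanishing in the factorisation above is absorbed entirely by $L(B/K_n,1)$ and never by $L(B^{(-\sqrt{-q})}/K_n,1)$.

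\medskip

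The main obstacle is precisely the case $q \equiv 15 \pmod{16}$: the algebraic input from Theorem~\ref{thm:main} no longer suffices, and a genuine analytic ingredient — a root-number computation at every layer together with the verification that the relevant Iwasawa $\mu$- and $\lambda$-invariants of the Katz $\fp$-adic $L$-function vanish — is needed to upgrade Rohrlich's generic non-vanishing from ``all but finitely many layers'' to ``every $n\ge 0$''. The fact that $2$ splits in $K$ and that $K_\infty/K$ is the $\BZ_2$-extension unramified outside $\fp$ is expected to make this manageable, but the interaction between $\eta$ and the prime $\fp^*$ requires careful tracking.
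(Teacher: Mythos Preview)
Your reduction in the first paragraph is exactly the paper's: the twist of $B^{(R)}$ by $\chi_n$ is $B^{(-\sqrt{-q})}$, independent of $R$, so the assertion is equivalent to $L(B^{(-\sqrt{-q})}/K_n,1)\neq 0$ for all $n\ge 0$. After this point the paper does not attempt an internal argument at all; it observes that $L(B^{(-\sqrt{-q})}/K_n,s)=L(A^{(-\sqrt{-q})}/H_n,s)$ and simply invokes \cite[Theorem~1.5]{CL}, which asserts $L(A^{(-\sqrt{-q})}/H_n,1)\neq 0$ for every $n\ge 0$ and every prime $q\equiv 7\bmod 8$. That external theorem is the entire content here.

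Your alternative route --- factoring through $L(B/L_n,s)$ with $L=K(\sqrt{-\sqrt{-q}})$ and using Theorem~\ref{thm:main} with $R=1$ together with the Main Conjecture --- is a genuine argument when $q\equiv 7\bmod 16$ (though note it still rests on \cite{CL}, now for the $2$-adic Main Conjecture rather than for Theorem~1.5). But for $q\equiv 15\bmod 16$ your proposal is not a proof: you yourself identify the obstacle, and the final two paragraphs are a sketch of what one would \emph{like} to do (root-number analysis, $\mu=\lambda=0$ for a Katz $\fp$-adic $L$-function, separating the vanishing between the two factors), not an argument that it can be done. In particular, showing that ``any vanishing is absorbed entirely by $L(B/K_n,1)$'' is precisely the non-vanishing statement you are trying to prove, so the reasoning as stated is circular, and Rohrlich-type results only give non-vanishing for all but finitely many $n$, not for every $n$. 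The gap in the $q\equiv 15\bmod 16$ case is real; the clean fix is to cite \cite[Theorem~1.5]{CL} directly, as the paper does.
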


\begin{proof} This theorem, which is valid for all primes $q \equiv 7 \mod 8$ is an equivalent form of Theorem 1.5 of \cite{CL}. Put $\beta = \sqrt{-q}$. Now
$L(B^{(R)}/K_n, \chi_n, s)$ is the complex $L$-series of the twist of $B^{(R)}$ by the quadratic extension $F_n/K_n$. As $F_n = K_n(\sqrt{-\beta R})$, it follows
that the twist of $B^{(R)}$ by $F_n/K_n$ is just $B^{(-\beta)}/K_n$. Put $H_n = HK_n$, where we recall that $H$ is the Hilbert class field of $K$. Then
the complex $L$-series of $B^{(-\beta)}/K_n$ is the same as the complex $L$-series $L(A^{(-\beta)}/H_n, s)$ of $A^{(-\beta)}/H_n$, where $A/H$ is the basic Gross $\BQ$-curve defined over $H$. Since Theorem 1.5 of \cite{CL} asserts that $L(A^{(-\beta)}/H_n, 1) \neq 0$ for all $n \geq 0$, the proof is complete.
\end{proof}

The following deep theorem is a consequence of the main conjecture of Iwasawa theory for the $\BZ_2$-extension 
$F_\infty/F$. As above, $c_{F_\infty}(T)$ will denote a characteristic power series of the $\Gamma$-module $X(F_\infty)$.

\begin{thm}\label{n6} Let $n\geq 0$ be any non-negative integer, and let $R$ in $\CO_K$ satisfy the Twisting Hypothesis. Then $c_{F_\infty}(u\zeta-1) \neq 0$ for all $2^n$-th roots of unity $\zeta$ if and only if $L(B^{(R)}/F_n,1) \neq 0$.
\end{thm}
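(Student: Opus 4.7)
The plan is to deduce the equivalence from Rubin's proof of the Iwasawa main conjecture for imaginary quadratic fields, applied to the CM abelian variety $B^{(R)}/F$, together with the interpolation formula for the associated $\fP$-adic $L$-function. The key dictionary is that $c_{F_\infty}(u\zeta - 1)$ is, up to a unit, the value of a characteristic power series of the Pontryagin dual of $\Sel_{\fP^\infty}(B^{(R)}/F_\infty)$ at the character $\chi$ of $\Gamma$ determined by $\chi(\gamma) = \zeta$, while the main conjecture equates this characteristic power series with a $\fP$-adic $L$-function interpolating Hecke $L$-values attached to $B^{(R)}/F$.

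First, I would carry out the standard twist computation linking $c_{F_\infty}$ to the characteristic power series of the dual Selmer module. Using Theorem~\ref{t2} and Pontryagin duality, the dual of $\Sel_{\fP^\infty}(B^{(R)}/F_\infty)$ is isomorphic, as a $\Lambda(\Gamma)$-module, to $X(F_\infty) \otimes_{\BZ_2} \BZ_2(\kappa^{-1})$. Since $\kappa(\gamma) = u$, a characteristic power series of this twist is $c^{\kappa}_{F_\infty}(T) := c_{F_\infty}(u(1+T) - 1)$, and evaluation at $T = \zeta - 1$ recovers precisely $c_{F_\infty}(u\zeta - 1)$. The question then becomes whether $c^{\kappa}_{F_\infty}$ is coprime in $\Lambda(\Gamma)$ to the polynomial $(1+T)^{2^n} - 1 = \prod_\zeta (T - (\zeta-1))$.

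Second, I would invoke Rubin's main conjecture. Since $B^{(R)}$ has good reduction everywhere over $F$ by Theorem~\ref{t1} and is ordinary at $\fp$, Rubin's theorem identifies, up to a unit in $\Lambda(\Gamma)$, a characteristic power series of the dual $\fP$-primary Selmer module over $F_\infty$ with a $\fP$-adic $L$-function $\CL^{(R)}_{\fP}(T) \in \Lambda(\Gamma) \otimes_{\BZ_2} \BQ_2$ obtained by restricting Katz's two-variable $\fp$-adic $L$-function along the Hecke character attached to $B^{(R)}/F$. The Katz interpolation formula then yields, for every finite order character $\chi$ of $\Gamma/\Gamma_n$ with $\chi(\gamma) = \zeta$,
\[
\CL^{(R)}_{\fP}(\chi) \doteq E_{\fp}(\chi) \cdot \frac{L(B^{(R)}/F, \chi, 1)}{\Omega_{\infty}^{h}},
\]
where $\doteq$ denotes equality up to a nonzero algebraic factor, $\Omega_\infty$ is the classical complex period of $A$, and $E_{\fp}(\chi)$ is a product of local interpolation factors at the primes of $F$ above $\fp$. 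A direct verification, using that $\fP$ is unramified in $\msb$ and that the Frobenius of $\fp$ acts as a $\fP$-adic unit on the $\fP$-divisible group of $B^{(R)}$, shows that $E_{\fp}(\chi) \neq 0$ for every such $\chi$.

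Third, the characters of $\Gamma/\Gamma_n$ are in bijection with $2^n$-th roots of unity via $\chi \mapsto \chi(\gamma)$, and combining the first two steps shows that $c_{F_\infty}(u\zeta - 1) \neq 0$ for every $2^n$-th root of unity $\zeta$ if and only if $L(B^{(R)}/F, \chi, 1) \neq 0$ for every character $\chi$ of $\Gal(F_n/F)$. The factorization $L(B^{(R)}/F_n, s) = \prod_\chi L(B^{(R)}/F, \chi, s)$ converts the latter into $L(B^{(R)}/F_n, 1) \neq 0$, completing the argument. The main obstacle will be the precise bookkeeping of the Hecke character twist inside Rubin's formulation of the main conjecture, together with the verification that the local factors $E_{\fp}(\chi)$ introduce no spurious zeros; both tasks were handled in the untwisted case $R = 1$ in \cite{CL}, and the Twisting Hypothesis (which forbids $R$ from sharing a factor with $\fp$ or $\fp^*$) ensures that the passage from $R = 1$ to general $R$ does not alter any local datum at $\fp$.
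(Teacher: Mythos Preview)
Your proposal is correct and follows the same route as the paper: the paper does not give a detailed argument either, saying only that the theorem is an immediate corollary of the main conjecture for $B^{(R)}/F_\infty$, whose proof via the Euler system of elliptic units is a direct generalization of Theorem~7.13 of \cite{CL}. Your sketch fleshes out the standard steps (the twist identification of the dual Selmer module with $X(F_\infty)(\kappa^{-1})$, the interpolation formula, and the Artin factorization of $L(B^{(R)}/F_n,s)$) that lie behind this citation; the only caveat is that the phrase ``Rubin's theorem'' should be understood as its $p=2$ extension carried out in \cite{CL}, since Rubin's original argument was for odd $p$, but you already point to \cite{CL} for this.
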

Combining this result with Theorem \ref{n12} and Lemmas \ref{n3} and \ref{n4}, we immediately obtain 
\begin{cor}\label{n13} For all $n \geq 0$, we have $\Sel_{\fP^\infty}(B^{(R)}/K_n)$ is finite if and only if $L(B^{(R)}/K_n, 1) \neq 0$. 
\end{cor}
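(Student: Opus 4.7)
The plan is to chain together Lemmas \ref{n3} and \ref{n4} with Theorems \ref{n12} and \ref{n6}; no new ingredient is needed, the corollary is essentially a bookkeeping exercise translating ``finite Selmer group'' into ``$L$-value non-vanishing'' via the $F_n$-picture.

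First I would rewrite finiteness of $\Sel_{\fP^\infty}(B^{(R)}/K_n)$ as the vanishing of its $\BZ_2$-corank, which is legitimate because the Selmer group is a cofinitely generated $\BZ_2$-module (being cut out by local conditions from the cofinitely generated module $H^1(K_n, B^{(R)}_{\fP^\infty})$). Lemma \ref{n4} then identifies this corank with the $\BZ_2$-corank of $\Sel'_{\fP^\infty}(B^{(R)}/F_n) = (\Sel_{\fP^\infty}(B^{(R)}/F_\infty))^{\Gamma_n}$, so finiteness over $K_n$ is equivalent to finiteness of this $\Gamma_n$-invariant. The contrapositive of Lemma \ref{n3} then shows that the $\Gamma_n$-invariant is finite if and only if $c_{F_\infty}(u\zeta - 1) \neq 0$ for every $2^n$-th root of unity $\zeta$.

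Next, Theorem \ref{n6} transforms the non-vanishing condition on $c_{F_\infty}$ at the twists of $u-1$ into the analytic non-vanishing $L(B^{(R)}/F_n, 1) \neq 0$. Finally, Theorem \ref{n12} (itself extracted from the $L$-factorisation $L(B^{(R)}/F_n, s) = L(B^{(R)}/K_n, s)\,L(B^{(R)}/K_n, \chi_n, s)$ together with the non-vanishing of the twisted factor at $s=1$) replaces $L(B^{(R)}/F_n, 1)$ by $L(B^{(R)}/K_n, 1)$. Assembling these equivalences yields the corollary.

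The only mild point requiring care is to make sure the chain of ``iff''s holds for the given fixed $n$, not merely for all $n$ simultaneously; this is automatic because each of Lemmas \ref{n3}, \ref{n4} and Theorems \ref{n6}, \ref{n12} is stated level-by-level. No step is truly an obstacle, since the heavy lifting has already been done by Theorem~\ref{thm:main} (which underlies Lemma \ref{n3} through the characteristic power series) and by the main conjecture input feeding Theorem \ref{n6}.
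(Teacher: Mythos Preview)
Your proposal is correct and follows exactly the paper's approach: the paper states that Corollary~\ref{n13} is obtained by ``combining this result [Theorem~\ref{n6}] with Theorem~\ref{n12} and Lemmas~\ref{n3} and~\ref{n4}'', which is precisely the chain of equivalences you lay out. The one minor inaccuracy is your remark that Theorem~\ref{thm:main} underlies Lemma~\ref{n3}; in fact Lemma~\ref{n3} only needs $X(F_\infty)$ to be $\Lambda(\Gamma)$-torsion (equivalently, finitely generated over $\BZ_2$), which already follows from Proposition~\ref{prop:Xbound}, and the full rank computation of Theorem~\ref{thm:main} is not required for the corollary itself.
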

\noindent Of course, this is not the place to give a detailed proof of this main conjecture. We simply say that similar methods, using the Euler system of elliptic units, to those used  to prove Theorem 7.13 in \S7 of \cite{CL}, can be generalized to prove the desired main conjecture for $B^{(R)}/F_\infty$, from which the above theorem follows as an immediate corollary.

\medskip
\begin{proof} We now prove Theorem \ref{n2}. We first suppose we are in case (i), so that $q \equiv 15 \mod 32$ and $R = 1$. By a theorem of Rohrlich \cite{R}, we have $L(B/K, 1) = L(A/H, 1) \neq 0$. Thus, by Corollary \ref{n13}, $\Sel_{\fP^\infty}(B/K)$ is finite, and, for all $n \geq 1$, $\Sel_{\fP^\infty}(B/K_n)$ is finite if and only if $L(B/K_n, 1) \neq 0$. Moreover,  as $s_\infty(R) =2$, it follows from Theorem \ref{t3} that $\Sel_{\fP^\infty}(B/F_\infty) = \BQ_2/\BZ_2$ as an abelian group, and so Lemma \ref{n4} implies that $\Sel_{\fP^\infty}(B/K_n)$ has $\BZ_2$-corank at most 1 for all $n \geq 1$. This completes the proof of case (i). Next suppose we are in the case (ii), so that $q \equiv 7 \mod 16$ and $R=r$, with $r \equiv 5 \mod 8$ and $r$ inert in $K$. Then Theorem 1.3 of \cite{CL} shows that $L(B^{(R)}/K, 1) = L(A^{(R)}/H, 1) \neq 0$. Again invoking Corollary \ref{n13}, it follows that $\Sel_{\fP^\infty}(B^{(R)}/K)$ is finite, and, for all $n \geq 1$, $\Sel_{\fP^\infty}(B^{(R)}/K_n)$ is finite if and only if $L(B^{(R)}/K_n, 1) \neq 0$.  But  $\Sel_{\fP^\infty}(B^{(R)}/F_\infty) = \BQ_2/\BZ_2$ as an abelian group, whence, again by Lemma \ref{n4}, $\Sel_{\fP^\infty}(B^{(R)}/K_n)$ has $\BZ_2$-crank at most $1$ for all $n \geq 1$, and the proof of case (ii) is complete. Finally, suppose we are in case (iii), so that $q \equiv 7 \mod 16$ and $R= -r$, with $r\equiv 3 \mod 8$ and $r$ inert in $K$. 
Now $B^{(R)}$ is in fact defined over $\BQ$,
and $ L(B^{(R)}/K, s) =  L(B^{(R)}/\BQ, s)^2$, where $ L(B^{(R)}/\BQ, s)$ is the complex $L$-series of $B^{(R)}/\BQ$ (see \S18 of \cite{Gross1}). Write
$$
\Phi(B^{(R)}/\BQ, s)) = (-qr/(2\pi))^{hs}\Gamma(s)^hL(B^{(R)}/\BQ, s),
$$
where we recall that $h$ is the class number of $K$. Now, since $h$ is odd and $2$ splits in $K$, Theorem 19.1.1 of \cite{Gross1} shows that $\Phi(B^{(R)}/\BQ, s)$ satisfies
the functional equation
$$
\Phi(B^{(R)}/\BQ, s) = - \Phi(B^{(R)}/\BQ, 2-s).
$$
Hence $\Phi(B^{(R)}/\BQ, s))$ must have a zero of odd order at $s=1$, and so $L(B^{(R)}/\BQ, 1) = 0$, whence also $L(B^{(R)}/K, 1) = 0$, which, in turn,
clearly implies that $L(B^{(R)}/F, 1) = 0$, and $L(B^{(R)}/K_n, 1) = 0$ for all $n \geq 0$. Now  $\Sel_{\fP^\infty}(B^{(R)}/F_\infty)  = \BQ_2/\BZ_2$ as an abelian group, and 
$\Sel_{\fP^\infty}(B^{(R)}/F_\infty)^\Gamma$ is infinite because $L(B^{(R)}/F, 1) = 0$. Hence, since the only infinite subgroup
of $\BQ_2/\BZ_2$ is the whole group, it follows that
\begin{equation}\label{n8}
\Sel_{\fP^\infty}(B^{(R)}/F_\infty)^\Gamma = \Sel_{\fP^\infty}(B^{(R)}/F_\infty) = \BQ_2/\BZ_2.
\end{equation}
Thus, by Lemma \ref{n4}, for all $n \geq 0$, we have $\Sel'_{\fP^\infty}(B^{(R)}/F_n) = \BQ_2/\BZ_2$, whence $\Sel_{\fP^\infty}(B^{(R)}/K_n)$ has $\BZ_2$-corank 1. This completes
the proof of case (iii), and so of the whole theorem.
\end{proof}

\medskip
 
We end by discussing one numerical example with $s_\infty(R) = 4$, which arose in \cite{CC}. Let $q = 7$, so that $B$ is the elliptic curve 
$X_0(49)$, with equation
\begin{equation}\label{4.2}
y^2 + xy = x^3 - x^2 - 2x - 1.
\end{equation}
Take $R = 741 = 3.13.19$, and define $E = X_0(49)^{(R)}$, whence $s_\infty(R) = 4$. By Theorem \ref{t4}, we then have $g_{F_\infty}(R) + e_{F_\infty}(R) = 3$. It is shown in \cite{CC} that $E(K)\otimes \BQ$ has
dimension $2$ over $K$ and $\Sha(E/K)(2)=0$. We shall prove the following result. We recall that in this case $F_\infty = K(E_{\fp^\infty})$, where $K = \BQ(\sqrt{-7})$, and $F_n = K(E_{\fp^{n+2}})$ for all $n \geq 0$.

\begin{prop} Let $E = X_0(49)^{(R)}$, with $R = 741$. Then $g_{F_\infty} =2$, and $e_{F_\infty}(R) = 1$, so that $E(F_\infty)\otimes_{\BZ}\BQ$ has $K$-dimension equal to $2$, and $\Sha(E/F_\infty)(\fp^\infty) = \BQ_2/\BZ_2$. Moreover, for all integers $n \geq 0$, we have that $E(F_n)\otimes_{\BZ}\BQ$ has $K$-dimension equal to $2$ and $\Sha(E/F_n)(\fp^\infty)$ is finite.
\end{prop}

\begin{proof}There are two ingredients to the proof. The first is a MAGMA calculation. Let $C/K$ be the twist of $E/K$ by the quadratic extension $K_1/K$. Then a  MAGMA computation shows that $C(K)\otimes \BQ$ has dimension $0$ and $\Sha(C/K)(2)=0$. It follows easily that $E(K_1)\otimes \BQ$ has $K$-dimension $2$ and $\Sha(E/K_1)(\fp^\infty)$ is finite, whence $\Sel_{\fp^\infty}(E/K_n)$ has corank $2$ for $n=1$. We now recall that (see \cite[Theorem 2.13]{CC}), for all $n\geq 0$ both the $K$-dimensions of $E(K_n)\otimes \BQ$ and the corank of  $\Sha(E/K_n)(\fp^\infty)$ do not change if we replace $K_n$ with $F_n$. Thus the MAGMA calculation proves that $\Sel_{\fp^\infty}(E/F_1)$ as well as $\Sel'_{\fp^\infty}(E/F_1)$ has $\BZ_2$-corank equal to 2 by Lemma~\ref{n4}. The second ingredient in the the proof is a simple remark on the characteristic power series $c_{F_\infty}(T)$ of $X(F_\infty)$. By the Weierstrass Preparation Theorem, we may assume that $c_{F_\infty}(T)$ is a monic polynomial of degree 3, because $X(F_\infty)$ is a free $\BZ_2$-module of rank 3. %By Lemma~\ref{n4},
%For $n \geq 0$, let $\fp_n$ be the unique prime of $F_n$ above $\fp$, and define
%\[
%\Sel'(E/F_n) = \Ker(H^1(F_n, E_{\fp^\infty}) \to \prod_{v \neq \fp_n}H^1(F_{n,v}, E)(\fp^\infty)). 
%\]
%$\Sel_{\fp^\infty}(E/F_n)$ and $\Sel'(E/F_n)$ have the same $\BZ_2$-corank for all $n \geq 0$, and, writing
%$\Gamma_n = \Gal(F_\infty/F_n)$, we have (see Prop. 2.11 of \cite{CC})
%\begin{equation}\label{4.3}
%\Sel'(E/F_n) = \Sel_{\fp^\infty}(E/F_\infty)^{\Gamma_n}.
%\end{equation}
%Let $\kappa: \Gamma \to 1+\fp^2$ be the character giving the action of $\Gamma$ on $E_{\fp^\infty}$, and put $u = \kappa(\gamma)$, where $\gamma$ is our fixed topological
%generator of $\Gamma$.
The characteristic power series of the Pontrjagin dual of $\Sel_{\fp^\infty}(E/F_\infty) = \Hom(X(F_\infty), E_{\fp^\infty})$ is given by $c_{F_\infty}(u(1+T) -1)$. Moreover, since $\Sel'_{\fp^\infty}(E/F)$ has $\BZ_2$-corank equal to 2, we conclude that $(T- (u-1))^2$ must divide $c_{F_\infty}(T)$. Suppose now that there exists an integer $n > 0$ such that $\Sel'_{\fp^\infty}(E/F_n)$ has $\BZ_2$-corank at least 3, and choose the smallest integer $n$ with this property. Then it follows from Lemma~\ref{n4} that $c_{F_\infty}(T)$ must also be dvisible by $T - (u\zeta - 1)$ for some primitive $2^n$-th root of unity $\zeta$. But, as $c_{F_\infty}(T)$ is a polynomial with coefficients in $\BZ_2$,
it would then be divisible by the product of all $T - (u\alpha -1)$, where $\alpha$ runs over all primitive $2^n$-th roots of unity. Since this product is an irreducible polynomial of degree $2^{n-1}$
with coefficients in $\BZ_2$, and $c_{F_\infty}(T)$ has degree 3, we must have $2^{n-1} = 1$, or equivalently $n = 1$. But our MAGMA calculation has 
already shown that $\Sel'(E/F_1)$ has $\BZ_2$-corank 2, and the proof is complete. \end{proof}

\medskip

\medskip

\noindent 
CAS Wu Wen-Tsun Key Laboratory of Mathematics, University of Science and Technology of China, \\
Hefei, Anhui 230026, China. \\
{\it lijn@ustc.edu.cn}


\begin{thebibliography}{10}

\bibitem{CC} J. Choi, J. Coates, {\em Iwasawa theory of quadratic twists of $X_0(49)$}. Acta Math. Sin. (Engl. Ser.) 34 (2018), no. 1, 19-28.

\bibitem{C1} J. Coates, {\em Infinite descent on elliptic curves with complex multiplication}, Arithmetic and geometry, vol.
I, Progress in Mathematics 35 (eds M. Artin and J. Tate; Birkhauser Boston, Boston, MA, 1983) 107-137.

\bibitem{CL} J. Coates, Y. Li, {\em Non-vanishing theorems for central L-values of some elliptic curves with complex multiplication}, Proc. Lond. Math. Soc. (3) 121 (2020), no. 6, 1531-1578.
\bibitem{CLL} J. Coates, J. Li, Y. Li, {\em Classical Iwasawa theory and infinite descent on a family of abelian varieties}, Selecta Math. (N.S.) 27 (2021), no. 2, Paper No. 28, 36 pp.

\bibitem{Fer} B. Ferrero, {\em The cyclotomic $\BZ_2$-extension of imaginary quadratic fields},  Amer. J. Math. 102 (1980), no. 3, 447-459.

\bibitem{Gras} G. Gras, {\em Classes g\'{e}n\'{e}ralis\'{e}es invariantes}, J. Math. Soc. Japan 46 (1994), no. 3, 467-476.

\bibitem{GR} R. Greenberg, {\em On the structure of certain Selmer groups}, Invent. Math. 47 (1978), no. 1, 85-99.

\bibitem{Gross1} B. Gross, {\em  Arithmetic on elliptic curves with complex multiplication}, Lecture Notes in Mathematics, 776. Springer, Berlin, 1980.

\bibitem{K} Y. Kida, {\em On cyclotomic $\BZ_2$-extensions of imaginary quadratic fields}, Tohoku Math. J. (2) 31 (1979), no. 1, 91-96.

\bibitem{L1} J. Li, {\em On the $2$-adic logarithm of units of certain totally imaginary quartic fields}, to appear in Asian J. Math.

\bibitem{LY} J. Li, C. Yu, {\em The Chevalley-Gras formula over global fields}, J. Th\'{e}or. Nombres Bordeaux. 32 (2020), no. 2, 525-544.

%\bibitem{Parry}  C. Parry. A genus theory for quartic fields. J. Reine Angew. Math. 314 (1980), 40-71.
\bibitem{R} D. Rohrlich, {\em The non-vanishing of certain Hecke $L$-functions at the centre of the critical strip}, Duke Math. J. 47 (1980), 223-232.

\end{thebibliography}
\end{document}